\DeclareSymbolFont{cyrletters}{OT2}{wncyr}{m}{n}
\DeclareMathSymbol{\Sha}{\mathalpha}{cyrletters}{"58}
\DeclareMathOperator{\Norm}{\mathsf{N}}
\DeclareMathOperator{\Prob}{Prob}
\DeclareMathOperator{\Gal}{Gal}
\DeclareMathOperator{\disc}{Disc}
\DeclareMathOperator{\Cl}{Cl}
\DeclareMathOperator{\rank}{rank}
\DeclareMathOperator{\Aut}{Aut}
\newcommand{\Zp}{\mathbb{Z}_p}
\newcommand{\cO}{\mathcal O}
\newcommand{\fp}{\mathfrak p}
\newcommand{\ac}{{\mathrm{ac}}}
\newcommand{\Z}{{\mathbb{Z}}}
\newcommand{\Q}{{\mathbb{Q}}}
\newcommand{\F}{{\mathbb{F}}}
\newcommand{\cyc}{{\mathrm{cyc}}}
\newtheorem*{theorem*}{Theorem}
\newtheorem*{conj*}{Conjecture}
\newtheorem*{Ques*}{Question}
\newtheorem{theorem}{Theorem}[section]
\newtheorem{lemma}[theorem]{Lemma}
\newtheorem{proposition}[theorem]{Proposition}
\newtheorem{cor}[theorem]{Corollary}
\definecolor{Green}{rgb}{0.0, 0.5, 0.0}
\theoremstyle{definition}
\theoremstyle{remark}
\newtheorem{remark}[theorem]{Remark}
\newtheorem*{Remark*}{Remark}
\theoremstyle{plain} %makes the name of the conjecture/theorem bold
\newtheorem*{intr@thm}{\intr@thmname}
\newtheorem*{c@njecture}{\conjn@name}
\newcommand{\myl@bel}[2]{%
  \protected@write \@auxout {}{\string \newlabel {#1}{{#2}{\thepage}{#2}{#1}{}} }%
  \hypertarget{#1}{}
    } %showing an error here
\newenvironment{labelledconj}[3][]%
    {
        \def\conjn@name{#2}
        \begin{c@njecture}[{#1}]\myl@bel{#3}{#2}
    }
    {
        \end{c@njecture}
    }
\begin{document}
\title[Heuristics for  anti-cyclotomic $\Zp$-extensions]{Heuristics for  anti-cyclotomic $\Zp$-extensions}

\author[D.~Kundu]{Debanjana Kundu}
\address[Kundu]{Fields Institute \\ University of Toronto \\
 Toronto ON, M5T 3J1, Canada}
\email{dkundu@math.toronto.edu}

\author[L.~C.~Washington]{Lawrence C.~Washington}
\address[Washington]{Department of Mathematics\\
University of Maryland, College Park, MD 20742 USA}
\email{lcw@umd.edu}

\date{\today}

\keywords{}
\subjclass[2020]{Primary 11R23}

\begin{abstract}
This paper studies Iwasawa invariants in anti-cyclotomic towers.
We do this by proposing two heuristics supported by computations.
First we propose the \emph{Intersection Heuristics}: these model `how often' the $p$-Hilbert class field of an imaginary quadratic field intersects the anti-cyclotomic tower and to what extent.
Second we propose the \emph{Invariants Heuristics}: these predict that the Iwasawa invariants $\lambda$ and $\mu$ usually vanish for imaginary quadratic fields where $p$ is non-split.
\end{abstract}

\maketitle

\section{Introduction}

Let $K$ be a number field and let $p$ be a prime,  which we shall always take to be odd.
Let $K_{\infty}/K$ be a $\Zp$-extension, which means $\Gal(K_{\infty}/K)\simeq \Zp$, the additive group of $p$-adic integers.
Let $K_n$ be the subfield of degree $p^n$ over $K$, let $h_n$ be its class number, and let $p^{e_n}$ be the exact power of $p$ dividing $h_n$.
A well-known result of K.~Iwasawa says there exist integers $\lambda, \mu\ge 0$ and $\nu$, independent of $n$,  such that 
\[
e_n = \lambda n + \mu p^n + \nu,
\]
for all $n$ sufficiently large.

When $K$ is an imaginary quadratic field, there are two exactly $\Zp$-extensions such that $K_{\infty}/\mathbb Q$ is a Galois extension.
One is the \emph{cyclotomic} $\Zp$-extension $K_{\cyc}/K$: this is contained in the field obtained by adjoining all $p$-power roots of unity to $K$ and is such that $\Gal(K_{\cyc}/\Q)$ is abelian.
The other is the \emph{anti-cyclotomic} $\Zp$-extension $K_{\ac}/K$ and is such that $\Gal(K_{\ac}/\Q)$ is dihedral, with $\Gal(K/\Q)$ acting on $\Gal(K_{\ac}/K)$ by inversion.

The $\mu$-invariant for $K_{\cyc}$ is always 0  (see \cite{FW79}), and the distribution of $\lambda$-invariants when $p$ is fixed and $K$ varies has been studied by several authors, see for example \cite{San91, dummit1991computation, Oza01, Fuj13, Mur19}. 
In \cite{EJV11}, J.~Ellenberg--S.~Jain--A.~Venkatesh studied this question in the special case when $K$ varies over all imaginary quadratic fields and $p$ is non-split in $K$; they proposed a heuristic based on random $p$-adic matrices.
These heuristics have recently been extended to abelian extensions by D.~Delbourgo--H.~Knospe in \cite{DK22}.
The initial goal of our study was to see if similar predictions could be made for $K_{\ac}$.
However, our calculations suggest that $K_{\ac}/K$ where $p$ is non-split in $K$ acts more like the cyclotomic $\Zp$-extensions of totally real number fields, where Greenberg's Conjecture (see \cite{Gre76}) predicts that $\mu=\lambda=0$. 

Let $L(K)$ be the $p$-Hilbert class field of $K$, that is, the maximal unramified abelian $p$-extension of $K$. Then $\Gal(L(K)/K)$ is isomorphic to $A_0$, the Sylow $p$-subgroup of the ideal class group of $K$.
A  phenomenon that plays a prominent role when studying anti-cyclotomic $\Zp$-extensions is that $L(K)\cap K_{\ac}$ can be larger than $K$.
The question is how large can this intersection be?
Based on computational evidence, we propose the following:
\begin{conj*}[Intersection Heuristics]
Fix a finite abelian $p$-group $G$.
Let $K$ vary over imaginary quadratic fields such that the $p$-part of its ideal class group is isomorphic to $G$.
The probability that $L(K)\cap K_{\ac}=K_n$ is given by
\[
\frac{\#(\text{elements of $G$ of order exactly }p^n)}{\#G}.
\]
This number is also
\[
\frac{\#(\text{homomorphisms }G\longrightarrow{\Q/\Z} \text{ with image of size }p^n)}{\# G}.
\]
\end{conj*}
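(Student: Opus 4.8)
The two displayed fractions have the same numerator once one knows the asserted identity, so the genuinely provable content of the statement is the combinatorial equality
\[
\#\{g\in G : \mathrm{ord}(g)=p^n\}=\#\{\varphi\in\Hom(G,\Q/\Z) : \#\,\mathrm{im}(\varphi)=p^n\},
\]
valid for every finite abelian $p$-group $G$ and every $n\ge 0$. The plan is to reinterpret the right-hand set inside the Pontryagin dual $\widehat{G}=\Hom(G,\Q/\Z)$ and then exploit the (non-canonical) isomorphism $G\cong\widehat{G}$.

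First I would record a reduction lemma. Because $G$ is a finite $p$-group, every $\varphi\colon G\to\Q/\Z$ has image a finite $p$-subgroup of $\Q/\Z$, hence cyclic of $p$-power order. The key observation is then that the order of $\varphi$ as an element of the group $\widehat{G}$ equals the size of its image: $k\varphi=0$ holds exactly when $k$ annihilates $\mathrm{im}(\varphi)$, and since that image is cyclic its exponent coincides with its order. Thus $\#\,\mathrm{im}(\varphi)=p^n$ if and only if $\varphi$ has order exactly $p^n$ in $\widehat{G}$, and the right-hand count becomes the number of elements of $\widehat{G}$ of order exactly $p^n$. Second I would invoke the structure theorem: from $G\cong\bigoplus_i \Z/p^{a_i}\Z$ one gets $\widehat{G}\cong\bigoplus_i \widehat{\Z/p^{a_i}\Z}\cong G$ as abstract groups, so the number of elements of a fixed order in $\widehat{G}$ matches that in $G$, which is precisely the left-hand side. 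Chaining the two steps yields the identity.

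The argument is short, and the only point that requires genuine care is the order–image correspondence of the first step: it is special to the target $\Q/\Z$, resting on the fact that finite subgroups of $\Q/\Z$ are cyclic, and it would fail for a target admitting non-cyclic finite subgroups. I would therefore isolate that fact as a clean lemma before using it. As a self-contained alternative one can verify the identity factor by factor on each cyclic summand $\Z/p^{a}\Z$ and then assemble the count over the summands, but the dual description is both shorter and explains conceptually why the two formulae in the statement must agree.
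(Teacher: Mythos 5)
Your scoping of the problem is the right one, and this is worth making explicit: the displayed statement is a conjecture, and the paper offers no proof of the probabilistic assertion --- it is supported only by a plausibility argument (that $A_0$ should ``randomly'' map onto $\Gal(K_n/K)$) and by the computational data in Tables~\ref{T: 3k-1 100 to 101}--\ref{T: 5k 10 to 20}, which are produced via Brink's theorem. So the only item open to proof is exactly the counting identity you isolate, namely
\[
\#\{g\in G : g \text{ has order } p^n\}=\#\{\varphi\in\Hom(G,\Q/\Z) : \#\,\varphi(G)=p^n\},
\]
and on this point the paper is silent except for the remark, just after the conjecture in Section~\ref{S: Heuristics}, that the numerator of \eqref{H1} equals the number of surjective homomorphisms $G\to\Z/p^n\Z$; the equality with the element count is left as standard. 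Your duality argument fills this gap correctly: the image of any $\varphi$ is a finite subgroup of $\Q/\Z$, hence cyclic, so the order of $\varphi$ in $\widehat{G}=\Hom(G,\Q/\Z)$ coincides with $\#\,\varphi(G)$, and the non-canonical isomorphism $\widehat{G}\simeq G$ from the structure theorem transports the count of elements of order $p^n$ from $\widehat{G}$ to $G$. Each step is sound, and your caveat that the order--image correspondence genuinely uses cyclicity of finite subgroups of $\Q/\Z$ is well placed. What your write-up buys is a rigorous and conceptual justification that the two formulas in the conjecture agree; what it cannot buy --- and correctly does not claim --- is the probabilistic content itself, which remains heuristic and is exactly the part the paper's tables are designed to test.
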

In other words, the $p$-class group $G$ randomly %\DK{surjectively} 
maps to $\Gal(K_n/K)$ for large $n$ and the frequency of possible images is predicted to model the distribution of possible intersections. 

The Cohen-Lenstra heuristics (introduced in \cite{CL}) predict that the $p$-part of the ideal class group of $K$ is most often cyclic, and our heuristic says that for cyclic $G$, it is most common that
$L(K)$ is contained in $K_{\ac}$.
A result of S.~Fujii (see Theorem~\ref{Fujii's thm}) says that $\mu=\lambda=0$ in this case.
Moreover, our calculations suggest that $\mu=0$ is always true, and that
$\lambda=0$ when $A_0$ is cyclic and $L(K)\cap K_{\ac}=K$.
Putting these together yields the following:
\begin{conj*}[Invariants Heuristics]
Among the imaginary quadratic fields $K$ in which $p$ does not split, we always have $\mu=0$, and the proportion for which $\lambda=0$ is at least
\[
\left(1-\frac{1}{p}\right) + \frac{1}{p-1}\prod_{j\geq1}\left(1-p^{-j}\right).
\]
\end{conj*}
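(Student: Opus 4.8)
The plan is to treat the Intersection Heuristics and the Cohen--Lenstra heuristics as the two probabilistic inputs and to bound the density of $\{\lambda=0\}$ from below by the density of an explicit union of favorable configurations. Throughout write $A_0\cong G$ for the $p$-class group and set $C=\prod_{j\ge1}(1-p^{-j})$; I take the vanishing $\mu=0$ as given from the cited results on anti-cyclotomic towers with $p$ non-split. There are two configurations in which $\lambda=0$ can be asserted: the \emph{trivial} intersection $L(K)\cap K_{\ac}=K$, and the \emph{full} intersection $L(K)\subseteq K_{\ac}$ with $A_0$ nontrivial cyclic, where Theorem~\ref{Fujii's thm} applies. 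These two events are disjoint --- the first has intersection $K$, the second has intersection $L(K)\neq K$, and a full intersection is only possible when $A_0$ is cyclic --- so the proportion with $\lambda=0$ is at least the sum of their probabilities, and everything reduces to computing those two numbers.

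For the trivial intersection, condition on $A_0\cong G$: the Intersection Heuristics assign it conditional probability $1/\#G$ (the identity is the unique element of order $1$), while Cohen--Lenstra assigns $G$ the weight $C/\#\Aut G$. Hence
\[
\Prob\big(L(K)\cap K_{\ac}=K\big)=C\sum_{G}\frac{1}{\#G\,\#\Aut G}=C\prod_{k\ge2}\big(1-p^{-k}\big)^{-1}=1-\frac1p,
\]
where the middle equality is the standard Hall--Cohen--Lenstra identity and the last uses $C=(1-p^{-1})\prod_{k\ge2}(1-p^{-k})$. For the full intersection, the conditional probability given $A_0\cong\ZZ/p^m$ is the proportion of generators $\varphi(p^m)/p^m=1-1/p$, independent of $m$, and the weight of $\ZZ/p^m$ is $C/\varphi(p^m)=C/(p^{m-1}(p-1))$; multiplying and summing the geometric series gives
\[
\Prob\big(A_0\ \text{nontrivial cyclic},\ L(K)\subseteq K_{\ac}\big)=\sum_{m\ge1}\frac{C}{p^{m-1}(p-1)}\Big(1-\frac1p\Big)=\sum_{m\ge1}\frac{C}{p^m}=\frac{C}{p-1}.
\]
Adding the two contributions produces exactly the claimed bound $(1-\tfrac1p)+\tfrac{1}{p-1}\prod_{j\ge1}(1-p^{-j})$, with the inequality ``at least'' accounting for the intermediate intersections $K\subsetneq L(K)\cap K_{\ac}\subsetneq L(K)$ and the non-cyclic large-intersection cases, which we do not count even though some of them may also have $\lambda=0$.

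The combinatorial bookkeeping above is routine once the two Cohen--Lenstra sums are in hand, so the real work is arithmetic. Fujii's theorem secures the full-intersection case, but the trivial-intersection probability $1-1/p$ is computed over \emph{all} groups $G$, whereas the evidence quoted in the introduction asserts $\lambda=0$ only for \emph{cyclic} $A_0$ with trivial intersection. The main obstacle is therefore to show that $L(K)\cap K_{\ac}=K$ forces $\lambda=0$ even when $A_0$ is non-cyclic; only then does the favorable set have density equal to the clean expression $1-1/p$ rather than the strictly smaller density obtained by restricting to cyclic $A_0$. I would attack this through the module structure of $X=\varprojlim A_n$ over $\Gal(K_{\ac}/K)$, arguing that a trivial base intersection prevents the class group from propagating up the tower; quantifying this, and reconciling it with the $\mu=0$ input, is where the difficulty lies.
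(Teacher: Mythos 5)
Your derivation matches the paper's own rationale exactly: the same Cohen--Lenstra weighting combined with the Intersection Heuristics gives $1-\frac1p$ for the disjoint case and $\frac{1}{p-1}\prod_{j\ge1}\left(1-p^{-j}\right)$ for the full-containment (Fujii) case, and these disjoint events are summed to give the lower bound, with intermediate intersections discarded. The ``main obstacle'' you flag at the end (that trivial intersection should force $\lambda=0$ even for non-cyclic $A_0$) is not resolved in the paper either---since the statement is a heuristic, the paper supports that step only by the computational evidence for non-cyclic $A_0$ in Section~\ref{S: non-cyclic base field data} (Table~\ref{T: non-cyclic}), not by a module-theoretic argument, so your proposal is in essence the same as the paper's.
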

For $p=3$, this predicts that $\lambda=0$ at least $94\%$ of the time, and for $p=5$ at least $99\%$ of the time.
These are heuristic lower bounds, but we find it hard to imagine that there is such a small number of actual exceptions.
We show that there is (theoretical and/or experimental) evidence that $\lambda=\mu=0$ in the two extreme cases, i.e., when the $p$-part of the class group of $K$ is contained in the anti-cyclotomic tower or completely disjoint from it.
The cases where we could not calculate the Iwasawa invariants from our experiments are the ones that `interpolate' between these two extreme cases - it seems reasonable to expect these to act similarly.
%I MADE A FEW SMALL EDITS TO THIS.
Therefore we ask the following:
\begin{Ques*}
Is $\lambda=\mu=0$ always true for the anti-cyclotomic $\Zp$-extension of an imaginary quadratic field $K$ in which $p$ does not split?
\end{Ques*}

We restrict ourselves to the case that $p$ does \emph{not} split in $K$ because in the complementary case it is known that $\lambda>0$.
This follows from a result of \cite{Oza01} (see for example \cite[p.~283]{Fuj13}) and can be explained briefly as follows: looking at the inertia groups of the primes above $p$ for the $\Zp^2$-extension $\widetilde{K}/K$, one sees that the $\Zp$-extension $\widetilde{K}/K_{\ac}$ is unramified.
Therefore, the unramified Iwasawa module has positive $\Zp$ rank, so $\lambda>0$.
%Therefore, in this case we have a short exact sequence
%\[
%0 \longrightarrow (X_{\widetilde{K}})_{\Gal(\widetilde{K}/K_{\ac})} \longrightarrow X_{K_{\ac}} \longrightarrow \Zp \longrightarrow 0,
%\]
%where $X_{K_\ac}$ is the Iwasawa module isomorphic to $\Gal(L(K_{\ac})/K_\ac)$; and analogously for $X_{\widetilde{K}}$.
%Thus,
%\[
%\lambda(K_{\ac}/K) = \rank_{\Zp} X_{K_{\ac}} = \rank_{\Zp} (X_{\widetilde{K}})_{\Gal(\widetilde{K}/K_{\ac})} + 1 > 0.
%\]

%This is also explained in \cite[p.~266]{Gre76}.

Another situation where there is possibly an analogue of Greenberg's conjecture was considered in \cite{FK02}.
Suppose $p$ splits in $K$ as $\mathfrak p\overline{\mathfrak p}$.
Let $K_{\infty}/K$ be the $\Zp$-extension in which only $\mathfrak p$ ramifies and assume it is totally ramified.
It is known that $\mu=0$ (see \cite{Gil87, Sch87}), and the computations in \cite{FK02} suggest that also $\lambda=0$.

For cyclotomic $\Zp$-extensions of imaginary quadratic fields, R. Gold was able to evaluate the $\lambda$-invariant from the class numbers of the first few levels, see \cite{Gold}.
We have followed a similar strategy in our situation but it is more challenging.
In the cyclotomic case, Gold used the fact that the inverse limit of the $p$-class groups has no finite submodules.
This is no longer the case for anti-cyclotomic $\Zp$-extensions, as can be seen from the several cases where $\lambda=\mu=0$ but $\nu>0$.
Consequently, our methods cannot recognize situations where $\lambda>0$ or $\mu>0$, if they exist.

In addition to proposing the above heuristics, we prove the following result (see Theorem~\ref{T: no cyclic A_1}) which asserts that the Sylow $p$-subgroup of the ideal class group of the $n$-th layer of the anti-cyclotomic extension, denoted by $A_n$, is non-cyclic if $A_0$ is non-trivial and disjoint from $K_{\ac}$.
\begin{theorem*}
Let $K$ be an imaginary quadratic field and let $p$ be an odd prime that does not split in $K$.
Suppose that $A_0$ is non-trivial.
If the $p$-Hilbert class field of $K$ is disjoint from $K_{\ac}$, then $A_n$ is non-cyclic for all $n\geq 1$.
\end{theorem*}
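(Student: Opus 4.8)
The plan is to show that $A_n$ is forced to acquire generators in both eigenspaces for complex conjugation. Write $\tau$ for a complex conjugation in $\Gal(K_n/\Q)$ and $\sigma$ for a generator of $\Gal(K_n/K)$, so that $\tau\sigma\tau^{-1}=\sigma^{-1}$. Since $p$ is odd, $\tau$ splits $A_n=A_n^+\oplus A_n^-$ as abelian groups. The reduction I would use is the following elementary observation: if $A_n$ were cyclic, then $\tau$, being an order-$2$ automorphism of a cyclic $p$-group with $p$ odd, acts as $+1$ or as $-1$ on all of $A_n$ (the only square roots of $1$ in $(\Z/p^m)^{\times}$ are $\pm1$); that is, a cyclic $A_n$ must be purely plus or purely minus. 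Hence it suffices to prove that $A_n^+\neq0$ \emph{and} $A_n^-\neq0$, for then $A_n$ is neither pure, and so cannot be cyclic.

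First I would dispose of the minus part. The disjointness hypothesis $L(K)\cap K_{\ac}=K$ means precisely that the prime $\fp$ above $p$ is totally ramified in $K_{\ac}/K$ and that $L(K)\cap K_n=K$ for every $n$; by class field theory the latter makes the norm $\Norm_{K_n/K}\colon A_n\to A_0$ surjective, since its cokernel is $\Gal\big((L(K)\cap K_n)/K\big)$. Because $K$ is imaginary quadratic, $\tau$ acts as $-1$ on $A_0$, i.e. $A_0=A_0^-$. As $\Norm_{K_n/K}$ is $\tau$-equivariant it annihilates $A_n^+$ (which maps into $A_0^+=0$), so already its restriction $A_n^-\to A_0$ is surjective. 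Since $A_0$ is non-trivial by hypothesis, this forces $A_n^-\neq0$.

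The real content is the plus part. The natural source of a $\tau$-fixed class is the unique prime $\fP$ of $K_n$ above $\fp$: as $\fp$ is the only prime of $K$ over $p$ and is totally ramified, $\fP$ is fixed by every element of $\Gal(K_n/\Q)$, so $[\fP]\in A_n^+$ and is moreover an ambiguous class. I would therefore aim to prove $[\fP]\neq0$. To control this I would invoke Chevalley's ambiguous class number formula: here the only ramified place is $\fp$, with $\prod_v e_v=p^n=[K_n:K]$, and the $p$-part of the unit index is trivial (the imaginary quadratic fields with units beyond $\pm1$ have class number $1$, hence are excluded by $A_0\neq0$). The formula then gives $\#A_n^{\Gal(K_n/K)}=\#A_0$. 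Since the extended classes $j(A_0)$ lie in $A_n^-$ while $[\fP]$ lies in $A_n^+$, the ambiguous class group is generated by these two, and comparing orders reduces the nonvanishing of $[\fP]$ to the nonvanishing of the capitulation kernel $\ker\big(j\colon A_0\to A_n\big)$.

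The main obstacle is exactly this last point: showing that a class of $A_0$ capitulates in $K_n$, equivalently that $\fP$ is non-principal. I expect to establish it by exploiting that $K_n/K$ is \emph{not} a radical extension. Indeed, the only abelian-over-$\Q$ subfields of the dihedral extension $K_n/\Q$ are $\Q$ and $K$, so $\mu_p\not\subseteq K$ gives $\mu_p\not\subseteq K_n$, whence $K_n$ cannot arise from $K$ by adjoining a $p$-th root. If $\fP=(\pi)$ were principal, the relation $\fP^{p^n}=\fp\mathcal O_{K_n}$ together with total ramification at $\fp$ would make $\pi^{p^n}$ a unit multiple of a generator of $\fp$; analysing this modulo $p$-th powers of units, and using $\mu_p\not\subseteq K_n$, should force $K_n/K$ to be radical, a contradiction. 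This yields $[\fP]\neq0$ and hence $A_n^+\neq0$. Finally, total ramification of $\fp$ and the disjointness $L(K)\cap K_n=K$ persist at every layer, so the norm surjectivity and the Chevalley computation hold uniformly; the same argument therefore shows $A_n$ is non-cyclic for all $n\geq1$ simultaneously.
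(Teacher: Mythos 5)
Your opening reduction is correct (an involution of a cyclic $p$-group, $p$ odd, acts as $\pm 1$), and so is your minus-part argument via norm equivariance. The fatal problem is the plus part, which you yourself call the real content: the claim that $[\fP]\neq 0$ is not merely unproven, it is false under the theorem's hypotheses, and the paper's own data shows this. By your Chevalley computation, $\#A_1^{G}=\#A_0$ where $G=\Gal(K_1/K)$. Take $d=1843$ from Table~\ref{T: inert}: there $A_0\simeq\Z/3\Z$ is generated by the class of a prime above $11$, and the paper's computations (see the remark following Proposition~\ref{prop: specific}) show that this class does not become principal even in $K_2$, so the natural map $j\colon A_0\to A_1$ is injective. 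Hence $A_1^{G}=j(A_0)\subseteq A_1^-$, while $[\fP]$ is a $G$-fixed class lying in $A_1^+$; therefore $[\fP]\in A_1^+\cap A_1^-=0$. So the prime above $p$ \emph{is} principal up to prime-to-$p$ classes in $K_1$, even though $A_1\simeq\Z/27\Z\times\Z/9\Z$ is non-cyclic exactly as the theorem asserts (the case $d=1419$ in Table~\ref{T: ramify} behaves the same way). In particular your unit-theoretic sketch cannot be repaired: $\cO_{K_n}^\times$ has rank $p^n-1$, so knowing that $\pi^{p^n}$ is a unit multiple of a generator of $\fp$ carries no contradiction modulo $p$-th powers --- and no argument can, since the desired conclusion fails.

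What your proof leaves open is precisely the case that requires work. If $A_n$ is cyclic and $\tau$ acts as $-1$, the dihedral relation gives $\sigma^{-1}x=\tau\sigma\tau^{-1}x=-\sigma(-x)=\sigma x$ for all $x\in A_n$, so $\sigma^2$, and hence $\sigma$ (of odd order), acts trivially; then $A_n=A_n^{G}$ and Chevalley forces $A_n\simeq A_0$, cyclic with trivial Galois action. Eliminating this configuration is the content of the first lemma of Section~\ref{S: non-cyclicity}: assuming $A_1=A_1^{G}$, one combines Hilbert's Theorem 90, the fact that $\fp$ splits completely in $L(K)$ (its class has order dividing $2$), and the identification $L(K_1)=L(K)\cdot K_1$ to show that $j\colon A_0\to A_1$ is surjective; composing with the surjective norm then makes multiplication by $p$ surjective on the non-trivial $p$-group $A_0$, a contradiction. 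Had you handled the $\tau=-1$ case this way instead of through $[\fP]$, your eigenvalue reduction would actually yield a proof shorter than the paper's, since it renders the paper's other two steps (excluding cyclic orders $p^{u+1}$ and $\geq p^{u+2}$) unnecessary; as written, however, the proposal has no substitute for this key lemma, and the step it relies on instead is false.
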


\emph{Organization:}
Including this Introduction, the article has seven sections.
In Section~\ref{S: Preliminaries}, we record facts from classical Iwasawa theory and remind the reader of the Cohen--Lenstra heuristics.
In Section~\ref{S: Data + Calculations disjoint}, we consider the case when the $p$-Hilbert class field of $K$ is disjoint from the anti-cyclotomic tower.
We present data and prove sufficient conditions for the Iwasawa invariants to be trivial.
In Section~\ref{S: partially disjoint}, we study the case when the $p$-Hilbert class field partially intersects the anti-cyclotomic tower.
The computational methods for obtaining the tables in the above sections are explained in Section~\ref{S: Computation Method}.
In Section~\ref{S: non-cyclicity}, we prove that in the case that the $p$-Hilbert class field of $K$ is disjoint from the anti-cyclotomic tower, the $p$-Hilbert class field of each subsequent layer $K_n$ must be non-cyclic.
Finally, in Section~\ref{S: Heuristics} we present the two heuristics and explain the rationale behind them.
The \ref{H: intersection} model `how often'  the $p$-Hilbert class field intersects the anti-cyclotomic tower and `how much' is the intersection.
The \ref{H: AC} aim to predict `how often' the anti-cyclotomic Iwasawa invariants are trivial.

\section{Preliminaries}
\label{S: Preliminaries}

\subsection{Anti-cyclotomic Iwasawa theory}
Let $K$ be an imaginary quadratic field and $p$ be a fixed prime.
There exists a Galois extension $\widetilde{K}/K$ containing all the $\Zp$-extensions of $K$ such that
\[
\Gal\left(\widetilde{K}/K\right)\simeq \Zp^2.
\]
This $2$-dimensional Galois extension contains uncountably many quotients isomorphic to $\Zp$, and each quotient corresponds to some $\Zp$-extension $K_\infty/K$.
There are two $\Zp$-extensions that are special because they are the only ones that are Galois over $\Q$.
One is the often-studied \emph{cyclotomic} $\Zp$-extension which is abelian over $\Q$; the other is the \emph{anti-cyclotomic} $\Zp$-extension, denoted by $K_\ac$, which a pro-dihedral group (and hence non-abelian).

\begin{lemma}
\label{Fujii's lemma}
Keep the same notations as introduced above.
Then, 
\[
L(K)\cap \widetilde{K} \subseteq K_{\ac}.
\]
\end{lemma}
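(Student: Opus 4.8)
The plan is to exploit the action of complex conjugation on the relevant Galois groups and to reduce the statement to a short computation with $\Zp[\Gal(K/\QQ)]$-modules. Write $J$ for the nontrivial element of $\Gal(K/\QQ)$ (complex conjugation) and set $X=\Gal(\widetilde K/K)\cong\Zp^2$. Since $\widetilde K$ is the compositum of all $\Zp$-extensions of $K$, it is preserved by every automorphism of $\overline{\QQ}/\QQ$, so $\widetilde K/\QQ$ is Galois and $J$ acts on $X$. Because $p$ is odd, $2$ is invertible in $\Zp$, so the idempotents $\tfrac{1\pm J}{2}$ split $X=X^+\oplus X^-$ into the $(\pm1)$-eigenspaces for $J$.

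First I would match the two eigenspaces with the two distinguished $\Zp$-extensions. On $\Gal(K_{\cyc}/K)$ the element $J$ acts trivially (as $K_{\cyc}/\QQ$ is abelian), while on $\Gal(K_\ac/K)$ it acts by inversion (the pro-dihedral condition); hence $X^-\subseteq\Gal(\widetilde K/K_{\cyc})$ and $X^+\subseteq\Gal(\widetilde K/K_\ac)$. Each of these two kernels is a rank-one subgroup of $X$, so each eigenspace has $\rank\le 1$; as both eigenspaces are nonzero (the existence of $K_{\cyc}$ and $K_\ac$ guarantees this) and they sum to the rank-two module $X$, each has rank exactly one and in fact $\Gal(\widetilde K/K_\ac)=X^+$. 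Equivalently, since $(J+1)$ acts as $0$ on $X^-$ and as the invertible scalar $2$ on $X^+$, one has $(J+1)X=X^+=\Gal(\widetilde K/K_\ac)$.

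Next I would pin down the action of $J$ on $\Gal(L(K)/K)\cong A_0$. For any ideal $\fa$ of $K$ one has $\fa\,\overline{\fa}=(\Norm\fa)$, a principal ideal, so $[\overline{\fa}]=[\fa]^{-1}$ in the class group; thus $J$ acts by inversion on $A_0$. Because the Artin reciprocity isomorphism $A_0\xrightarrow{\sim}\Gal(L(K)/K)$ is $\Gal(K/\QQ)$-equivariant (conjugation on the Galois side matching the ideal-class action on the other), $J$ acts by inversion on $\Gal(L(K)/K)$ as well.

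Finally I would combine these. Put $M=L(K)\cap\widetilde K$; as the intersection of two fields Galois over $\QQ$ (the $p$-Hilbert class field is canonically attached to $K$, hence Galois over $\QQ$), $M/\QQ$ is Galois, and $\Gal(M/K)$ is a common quotient of $A_0$ and of $X$. Inversion passes to quotients, so $J$ acts by $-1$ on $\Gal(M/K)$; as a quotient of $X$ this forces $(J+1)X=X^+$ into the kernel $\Gal(\widetilde K/M)$. Hence $\Gal(\widetilde K/M)\supseteq X^+=\Gal(\widetilde K/K_\ac)$, that is $M\subseteq K_\ac$, which is the assertion. I expect the main obstacle to be conceptual rather than computational: the argument rests entirely on getting the sign conventions right --- that complex conjugation inverts ideal classes and that this is transported faithfully through Artin reciprocity --- together with the use of $p$ odd to obtain the clean eigenspace splitting. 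Once these are in hand the remainder is elementary $\Zp$-module bookkeeping.
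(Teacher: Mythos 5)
Your proof is correct, but it is worth noting that the paper does not actually prove this lemma: its ``proof'' is a citation to \cite[Lemma~2.2]{Fuj13}. Your argument supplies self-contained content, and it is the standard (and almost certainly the cited) argument: complex conjugation $J$ acts on $X=\Gal(\widetilde K/K)\cong\Zp^2$; oddness of $p$ gives $X=X^+\oplus X^-$; the abelian-over-$\Q$ (resp.\ dihedral-over-$\Q$) characterizations identify $K_{\cyc}$ and $K_{\ac}$ with the fixed fields of $X^-$ and $X^+$; since $\fa\cdot J(\fa)=(\Norm_{K/\Q}\fa)\cO_K$ is principal, $J$ inverts ideal classes, so by equivariance of the Artin map $J$ acts by $-1$ on $\Gal(L(K)/K)$ and hence on $\Gal(M/K)$ for $M=L(K)\cap\widetilde K$; therefore $(1+J)X=X^+\subseteq\Gal(\widetilde K/M)$, i.e.\ $M\subseteq K_{\ac}$. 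The one place you should add a line is the assertion $\Gal(\widetilde K/K_{\ac})=X^+$: your rank count only gives $X^+\subseteq\Gal(\widetilde K/K_{\ac})$ with both of rank one, and nested rank-one submodules of $\Zp^2$ need not coincide (e.g.\ $p\Zp\subsetneq\Zp$). The equality is what your final step genuinely uses, and it follows because $\Gal(\widetilde K/K_{\ac})$ is $J$-stable, so $\Gal(\widetilde K/K_{\ac})=X^+\oplus\bigl(\Gal(\widetilde K/K_{\ac})\cap X^-\bigr)$, and the second summand must vanish since $\Gal(K_{\ac}/K)\cong X^-/\bigl(\Gal(\widetilde K/K_{\ac})\cap X^-\bigr)$ is torsion-free of rank one. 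With that sentence added, your equivariance bookkeeping (Galois-ness of $L(K)$, $\widetilde K$, $M$ over $\Q$, and well-definedness of the conjugation action on abelian layers) is all in order; what your route buys over the paper's is a proof from first principles using only that $p$ is odd and the defining properties of the two distinguished $\Zp$-extensions.
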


\begin{proof}
For a proof of this assertion, we refer the reader to \cite[Lemma~2.2]{Fuj13}.
\end{proof}

In view of the above lemma, the following situations may arise:
\begin{enumerate}[(A)]
\item $L(K)=K$.
\item $L(K)\neq K$.
\begin{enumerate}[(a)]
\item $L(K)\cap K_{\ac} = L(K)$, i.e., the $p$-Hilbert class field of $K$ is contained in $K_{\ac}$.
\item $L(K)\cap K_{\ac} = K$, i.e., the $p$-Hilbert class field of $K$ is disjoint from $K_{\ac}$.
\item $K\subsetneq L(K)\cap K_{\ac} \subsetneq L(K)$, i.e., the $p$-Hilbert class field of $K$ partially intersects $K_{\ac}$.
\end{enumerate}
\end{enumerate}

The following theorem shows that the Iwasawa invariants in case (a) are well understood.
\begin{theorem}
\label{Fujii's thm}
With notation as introduced above, if $L(K)\subsetneq K_{\ac}$ then
\[
\mu(K_{\ac}/K) = \lambda(K_{\ac}/K) =0.
\]
\end{theorem}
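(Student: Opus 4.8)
The plan is to realize $\lambda$ and $\mu$ as the Iwasawa invariants of the unramified module $X = \varprojlim_n A_n \cong \Gal(L_\infty/K_{\ac})$, where $L_\infty$ is the maximal unramified abelian pro-$p$ extension of $K_{\ac}=\bigcup_n K_n$, and to show that the hypothesis forces $X$ to be finite; since $X$ is a finitely generated torsion module over $\Lambda=\Zp[[\Gamma]]$ with $\Gamma=\Gal(K_{\ac}/K)$, finiteness is equivalent to $|A_n|$ being bounded, hence to $\lambda=\mu=0$. First I would unwind the hypothesis. Because $\Gal(K_n/K)$ is cyclic, $L(K)\subseteq K_{\ac}$ forces $A_0=\Gal(L(K)/K)$ to be cyclic, say $A_0\cong\Z/p^t$ with $L(K)=K_t$. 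Since $p$ is non-split there is a unique prime $\fp$ above $p$, and $K_t$ is exactly its inertia field inside $K_{\ac}$: thus $\fp$ is unramified in $K_t/K$ and totally ramified in $K_{\ac}/K_t$. Moreover the condition that \emph{all} of $L(K)$, rather than a proper subfield, lands in $K_{\ac}$ should be reflected, via class field theory, in the arithmetic of $\fp$ (for instance that its class generates $A_0$); verifying and exploiting this is the special feature of case (a).

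Next I would set up the control theorem over the base $K_t$, where exactly one prime ramifies and it is totally ramified. The standard inertia-plus-Nakayama computation for a single totally ramified prime then gives $A_n\cong X/\omega_n X$ for all $n\ge t$, where $\omega_n=(1+T)^{p^n}-1$; at $n=t$ this reads $X_{\Gamma'}\cong A_t$ for $\Gamma'=\Gal(K_{\ac}/K_t)$. Consequently $X$ is, by Nakayama, finitely generated over $\Zp[[\Gamma']]$ by lifts of a generating set of $A_t$, and the norm maps $A_{n+1}\to A_n$ are surjective, so $|A_n|$ is nondecreasing. By Fukuda's theorem it then suffices to exhibit a single $n\ge t$ with $|A_{n+1}|=|A_n|$, equivalently to prove that $X$ is finite.

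The finiteness of $X$ is the genuinely hard step, and here the anti-cyclotomic structure must enter. Complex conjugation $\tau$ acts on $K_{\ac}$ with $\tau\gamma\tau^{-1}=\gamma^{-1}$, hence acts on $X$ semilinearly over the ring involution $T\mapsto(1+T)^{-1}-1$ of $\Lambda$; comparing $X$ with its $\tau$-twist shows that the characteristic ideal $\charac_\Lambda(X)$ is stable under this involution, i.e. it satisfies a self-dual functional equation. I would combine this with the two base-level constraints, namely that $\tau$ acts as $-1$ on $A_0$ (as for every imaginary quadratic field) and that $\fp$ generates $A_0$, to exclude a nontrivial characteristic power series: splitting $X=X^+\oplus X^-$ into $\tau$-eigenspaces, one argues that self-duality together with the cyclic, $\fp$-generated bottom layer forces both the $\mu$- and the $\lambda$-contribution to vanish, so that $\charac_\Lambda(X)=(1)$ and $X$ is finite. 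The main obstacle is precisely this last vanishing, because making it rigorous amounts to controlling the unit-index term in Chevalley's genus formula for the step $K_{t+1}/K_t$, which is delicate for a field as large as the Hilbert class field $K_t$; the payoff of the generating-class property is that it pins down this index and collapses the functional equation to triviality.
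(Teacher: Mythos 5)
Your proposal stops short of a proof, and two of its load-bearing claims are false. First, the ramification picture over $K_t=L(K)$ is wrong. Since $p$ is non-split in $K$, the unique prime $\fp$ above $p$ satisfies $\fp=(p)$ or $\fp^2=(p)$, so its class in $\Cl(K)$ has order dividing $2$ and hence \emph{trivial} image in the odd $p$-group $A_0$. By the Artin map, $\fp$ therefore splits completely in $L(K)/K=K_t/K$: above $K_t$ there are $p^t$ ramified primes (each totally ramified in $K_{\ac}/K_t$), not ``exactly one.'' This breaks your control theorem, since $A_n\cong X/\omega_nX$ requires a single totally ramified prime; with $p^t$ such primes one only gets $A_n\cong X/Y_n$ with extra generators, and Chevalley's formula over $K_t$ reads
\[
\#A_n^{G}\;=\;\#A_t\cdot\frac{p^{(n-t)(p^t-1)}}{[E_t:E_t\cap \Norm_{K_n/K_t}(K_n^\times)]},
\]
so boundedness of $\#A_n$ hinges entirely on the unit index being essentially maximal. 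It also destroys the arithmetic input you hoped to exploit: the class of $\fp$ can never generate $A_0$ when $A_0\neq 1$, since it is trivial there; the hypothesis $L(K)\subseteq K_{\ac}$ is instead the statement that the inertia field of $\fp$ in $K_{\ac}$ is as large as the full $p$-Hilbert class field, which is not captured by any generation property of $[\fp]$. (Your appeal to Fukuda's theorem does survive, as it allows several totally ramified primes.)

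Second, the finiteness step --- which you correctly identify as the heart of the matter --- is not an argument. Stability of $\charac_\Lambda(X)$ under the involution $T\mapsto(1+T)^{-1}-1$ induced by complex conjugation cannot force $\charac_\Lambda(X)=(1)$: the ideals $(p)$ and $(T)$ are themselves stable under this involution, so nonzero $\mu$ or $\lambda$ is perfectly consistent with your functional equation. You concede that making the vanishing rigorous ``amounts to controlling the unit-index term in Chevalley's genus formula,'' and leave that open; but that control \emph{is} the theorem. The paper itself proves nothing here --- it cites Fujii \cite[Remarks on p.~286]{Fuj13}, whose argument via Chevalley's ambiguous class number formula (using the dihedral structure of $K_n/\Q$ to pin down the unit index $[E_t:E_t\cap\Norm(K_n^\times)]$, noting that the layers $K_n$ are not CM fields) supplies exactly the step you are missing. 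So your reduction-to-finiteness framing is reasonable in outline, but the proposal halts precisely where the actual proof begins, and the two specific mechanisms you offer to close it (single ramified prime, $\fp$ generating $A_0$) are both untrue.
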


\begin{proof}
This assertion is proved in \cite[Remarks on p.~286]{Fuj13} using Chevalley's formula.
\end{proof}

\subsection{Cohen--Lenstra Heuristics}
\label{CL heuristics}
The Cohen--Lenstra heuristics are based on the observation that structures often occur in nature with frequency inversely proportional to their number of automorphisms.
The idea of the Cohen--Lenstra heuristics is that the $p$-part of the ideal class group of $K$ (with $p\neq 2$) is a finite abelian $p$-group that is distributed in this way.

\begin{labelledconj}{Cohen--Lenstra Heuristics}{CL Heuristics}
Let $D^-(x)$ denote the set of imaginary quadratic fields $K$ such that $\abs{\disc K}\le x$ and let $G(n)$ denote the set of all finite groups of order $n$.
For any `reasonable' function $f$ on finite abelian groups,
\[
\lim_{x\rightarrow{\infty}}\frac{\sum_{K\in D^-(x)}\abs{f\left(\Cl(K)\right)}}{\#\left\{K\in D^-(x)\right\}} = \lim_{n\rightarrow{\infty}} \frac{\sum_{i=1}^n \sum_{G\in G(i)}\frac{f(G)}{\# \Aut(G)}}{\sum_{i=1}^n \sum_{G\in G(i)}\frac{1}{\# \Aut(G)}}.
\]
\end{labelledconj}
The main examples where these heuristics are applied are when $f$ depends on the Sylow $p$-subgroup of $G$.

\section{Data and initial calculations: \texorpdfstring{$p$-Hilbert class field is disjoint from $K_{\ac}$}{}}
\label{S: Data + Calculations disjoint}

In this section, we consider the case where the $p$-Hilbert class field of $K$ is disjoint from $K_{\ac}$.
For computational reasons, we restrict to $p=3$ (when $p=5$, the extension $K_2/\Q$ has degree 50, and it is hard to compute class numbers of fields of such a large degree).
The methods of computation are described in Section~\ref{S: Computation Method}.

\subsection{Data for the case that \texorpdfstring{$A_0$}{} is cyclic}
Calculations presented in the data below were performed using PARI/GP.
In these tables, we record the $p$-part of the ideal class group in the first few layers of the anti-cyclotomic $\Zp$-extension of $K=\mathbb Q(\sqrt{-d})$, where we take $-d$ to be the  discriminant of the quadratic field.
Here, $A_n$ is the Sylow $p$-subgroup of the ideal class group of $K_n$.
To keep the tables less cumbersome, we write $n$ in place of $\Z/n\Z$.
Since the $p$-Hilbert class field of $K$ is disjoint from $K_{\ac}$, it follows that the unique prime above $p$ is totally ramified in the anti-cyclotomic $\Zp$-extension.
First, in Table~\ref{T: inert} we consider the case when $3$ is inert in the imaginary quadratic field.
In Table~\ref{T: ramify} we present the data when $3$ ramifies in the imaginary quadratic field.

\begin{center}
\renewcommand{\arraystretch}{1.25}
\setlength{\aboverulesep}{0pt}
\setlength{\belowrulesep}{0pt}
\begin{longtable}{ |c|c|c|c||c|c|c|c| }\caption{$3$ is inert}\label{T: inert}\\\toprule
$d$ & $A_0$ & $A_1$ & $A_2$ & $d$ & $A_0$ & $A_1$ & $A_2$ \\ \midrule
331 & 3 & $3\times 3\times 3$ & $3\times 3\times 3\times 3$ & 2491 & 3 & $3\times 3$ & $3\times 3$ \\
643 & 3 & $3\times 3$ & $3\times 3$ & 2740 & 3 & $3\times 3$ & $3\times 3$\\
835 & 3 & $3\times 3$ & $3\times 3$ & 2791 & 3 & $3\times 3$ & $3\times 3$\\
1048 & 3 &  $27\times 9$ & $27\times 27$ & 2824 & 3 & $3\times 3$ & $3\times 3$\\
1192 & 3 & $3\times 3$ & $3\times 3$ & 2923 & 3 & $3\times 3$ & $3\times 3$\\
1327 & 3 & $3\times 3$ & $3\times 3$ & 2344 & 9 & $9\times 3$ & $9\times 9$\\
1588 & 3 & $3\times 3$ & $3\times 3$ & 3643 & 9 & $9\times 3$ & $9\times 9$\\
1843 & 3 & $27\times 9$ & $81\times 27$ & 4363 & 9 & $9\times 3$ & $9\times 9$ \\
1951 & 3 & $3\times 3$ & $3\times 3$ & 4819 & 9 & $9\times 3$ & $9\times 9$\\ 
2227 & 3 & $3\times 3 \times 3$ & $3\times 3\times 3\times 3$ & 5464 & 9 & $9\times 3$ & $9\times 9$\\
2488 & 3 & $3\times 3$ & $3\times 3$ &  6763 & 9 & $9\times 3$ & $9\times 9$ \\
\bottomrule
\end{longtable}
\renewcommand{\arraystretch}{1}
\end{center}

%\vspace{-0.5cm}

\begin{center}
\renewcommand{\arraystretch}{1.25}
\setlength{\aboverulesep}{0pt}
\setlength{\belowrulesep}{0pt}
\begin{longtable}{ |c|c|c|c||c|c|c|c| }\caption{$3$ is ramified}\label{T: ramify}\\\toprule
$d$ & $A_0$ & $A_1$ & $A_2$ & $d$ & $A_0$ & $A_1$ & $A_2$ \\ \midrule
759 & 3 & $3\times 3$ & $3\times 3$ & 3048 & 3 & $9 \times 9$& $9 \times 9$ \\
771 & 3 & $3\times 3$ & $3\times 3$ &  3387 & 3 & $3\times 3$ & $3\times 3$ \\
804 & 3 & $3\times 3$ & $3\times 3$ & 3459 & 3 & $9\times 9$ &  $9\times 9$ \\
1191 & 3 & $3\times 3$ & $3\times 3$ & 3540 & 3 & $3\times 3$ & $3\times 3$ \\
1236 & 3 & $3\times 3$ & $3\times 3$ & 3687 & 3 & $3\times 3$ & $3\times 3$ \\
1272 & 3 & $3\times 3$ & $3\times 3$ & 3783 & 3 & $3\times 3$ & $3\times 3$ \\
1419 & 3 & $9\times 3$ & $27\times 9$ & 3912 & 3  & $3 \times 3$ & $3 \times 3$\\
1515 & 3 & $3\times 3$ & $3\times 3$ & 3999 & 3 & $3 \times 3$ & $3 \times 3$\\
1668 & 3 & $3\times 3$ & $3\times 3$ & 4308 & 3 & $3 \times 3$ & $3 \times 3$\\
2235 & 3 & $3\times 3$ & $3\times 3$ & 4827 & 3 & $3 \times 3$ & $3 \times 3$\\
2283 & 3 & $3\times 3$ & $3\times 3$ & 8331 & 9 & $9\times 3$ & $9\times 9$\\
2355 & 3 & $9\times 9$ & $9\times 9$ & 15243 & 9 & $9\times 3$ & $9\times 9$\\
2856 & 3 & $9\times 9$ & $9\times 9$ & 24207 & 9 & $9\times 3$ & $9\times 9$ \\\bottomrule
\end{longtable}
\renewcommand{\arraystretch}{1}
\end{center}

\begin{theorem}
\label{mu 0 lambda 0 maybe}
In all examples in Tables~\ref{T: inert} and \ref{T: ramify}, we have $\mu(K_\ac/K)=0$.
For all cases except possibly $d=1843$ and $d=1419$, we have $\lambda(K_\ac/K)=0$.
\end{theorem}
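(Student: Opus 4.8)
The plan is to read off the Iwasawa invariants from the finite amount of class-group data in Tables~\ref{T: inert} and~\ref{T: ramify}, exploiting the fact that the disjointness hypothesis forces the unique prime above $p$ to be totally ramified in $K_{\ac}/K$. Let $X=\varprojlim_n A_n$ be the inverse limit of the $p$-class groups with respect to the norm maps; this is a finitely generated torsion module over $\Lambda=\Zp[[T]]$, where $T=\gamma-1$ for a topological generator $\gamma$ of $\Gal(K_{\ac}/K)$. Total ramification of a single prime gives the standard identification $A_n\cong X/\omega_n X$ with $\omega_n=(1+T)^{p^n}-1$, and this structural input is what the whole argument rests on.

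For the vanishing of $\mu$ I would argue uniformly for every entry in the tables. Reducing modulo $p$ and using $\omega_n\equiv T^{p^n}\pmod p$, one obtains
\[
\rank_p A_n=\dim_{\bF_p}\bigl((X/pX)\big/T^{p^n}(X/pX)\bigr).
\]
If $\mu>0$ then $X/pX$ contains a free $\bF_p[[T]]$-summand, so the right-hand side is at least $p^{n}$. Since every table satisfies $\rank_p A_2\le 4<9=p^{2}$, this already forces $\mu=0$ in all cases; this step is clean and needs no stabilization.

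For $\lambda$ the key reduction is that, once $\mu=0$, one has $\lambda=0$ if and only if $X$ is finite, which in turn is equivalent to the exponents of the groups $A_n$ being bounded. I would then invoke the following dichotomy, a Fukuda-type stabilization result for totally ramified $\Zp$-extensions: if the exponent of $A_n$ equals the exponent of $A_{n+1}$ for some $n\ge 1$, then the exponents stabilize for all larger $n$, whence $X$ is finite and $\lambda=0$; conversely, if $\lambda>0$ the exponent strictly increases at every level $n\ge 1$. Inspecting the tables, the exponent of $A_1$ equals that of $A_2$ in every case except $d=1419$ (exponents $9$ then $27$) and $d=1843$ (exponents $27$ then $81$). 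This criterion is genuinely finer than comparing orders: for instance when $A_0=\Z/3$, $A_1=(\Z/3)^3$, $A_2=(\Z/3)^4$ the order of $A_n$ still grows from $n=1$ to $n=2$, yet the common exponent $3$ already certifies $\lambda=0$.

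The main obstacle is justifying the exponent-stabilization step rigorously, that is, deducing from the single numerical coincidence $\exp A_1=\exp A_2$ that no further growth occurs at levels $n\ge 3$ which the data cannot reach. Concretely, setting $Y=X/p^{m}X$ with $p^{m}=\exp A_1$, one has $A_n\cong Y/\omega_n Y$ and must rule out that $Y$ is infinite; were $Y$ infinite its $\Zp$-rank would be positive, and the elementary divisors of $(1+T)^{p^n}-1$ acting on $Y\otimes\Qp$ would force $\exp A_{n+1}>\exp A_n$ for every $n\ge 1$, contradicting the observed equality. Making this last estimate uniform in $n$ --- so that a coincidence at $n=1$ cannot be an accident followed by later growth --- is the delicate point, and it is precisely why the two exceptional discriminants $d=1419$ and $d=1843$, at which $\exp A_1<\exp A_2$, must be left open by this method.
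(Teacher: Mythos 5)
Your argument for $\mu=0$ is correct. Granting the identification $A_n\cong X/\omega_n X$ (legitimate here: disjointness of $L(K)$ from $K_{\ac}$ makes the unique prime above $p$ totally ramified, which is exactly the hypothesis of the result the paper cites from Washington's book), the structure theorem for finitely generated $\F_p[[T]]$-modules does give $\rank_p A_n\ge p^n$ whenever $\mu>0$, and $\rank_3 A_2\le 4<9$ rules this out for every table entry. This is a valid alternative to the paper's route, which instead applies Sands' inequality $\#A_n\ge \#A_0\cdot p^{\mu(p^n-1)+\min(p^n-1,\lambda)}$ directly to the orders in the tables.

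The $\lambda$ part, however, has a genuine gap: the ``Fukuda-type exponent stabilization'' dichotomy you invoke is not a known theorem, and it is in fact \emph{false} for the class of modules your argument works with. Take $p=3$ and $X=\Lambda/(T^9-3)\cong\cO:=\Z_3[\pi]$ with $\pi^9=3$; here $T^9-3$ is distinguished, so $\mu=0$ and $\lambda=9$, and every $A_n=X/\omega_nX\cong\cO/\bigl(\omega_n(\pi)\bigr)$ is finite (no root of $T^9-3$ has the valuation of any $\zeta_{3^k}-1$). A direct valuation computation gives $v_\pi(\omega_0(\pi))=1$, $v_\pi(\omega_1(\pi))=3$, $v_\pi(\omega_2(\pi))=9$, $v_\pi(\omega_3(\pi))=18$, hence $A_0\cong\Z/3$, $A_1\cong(\Z/3)^3$, $A_2\cong(\Z/3)^9$, $A_3\cong(\Z/9)^9$: the exponent is constant equal to $3$ through levels $0,1,2$ and only afterwards begins to grow, even though $\lambda=9$. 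So the coincidence $\exp A_1=\exp A_2$ simply cannot certify $\lambda=0$ from the structural input $A_n\cong X/\omega_nX$ alone, and excluding such $X$ would require arithmetic input your proposal does not supply. Your sketched justification also contains a concrete error: with $Y=X/p^mX$ one has $Y/\omega_nY\cong A_n/p^mA_n$, not $A_n$ (these agree only when $\exp A_n\le p^m$, which is precisely what is to be proved for $n\ge 3$); moreover $Y$ is killed by $p^m$, so it always has $\Zp$-rank $0$, and it is infinite if and only if $\mu>0$ --- already excluded --- so its finiteness says nothing about $\lambda$. The paper instead proves $\lambda=0$ entry by entry: order stabilization $A_n\cong A_{n+1}$ plus Nakayama (Proposition~\ref{prop: nakayama}), the elementary-rank criterion (Proposition~\ref{prop: elementary}), the explicit matrix computation (Proposition~\ref{prop: specific}), and, for $d=1048$, capitulation of a generating class in $A_2$ via Theorem~\ref{Gerth thm 1.3 c}. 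Note that your criterion would have disposed of $d=1048$ automatically (exponents $27=27$), whereas the paper needs a genuinely different, capitulation-based argument there --- another sign that the proposed dichotomy proves too much.
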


\begin{proof}
%The proof below shows that $\mu(K_\ac/K)=0$.
The proofs that $\lambda(K_\ac/K)=0$ for these examples will be given in Sections~\ref{S: Invariants disjoint} and \ref{S: non-cyclic base field data}.
When $K_\ac/K$ is totally ramified,
\cite[Lemma, p.~672]{San91} asserts that 
\begin{equation} \label{eqn: Sands}
\#A_n \ge \#A_0 \cdot p^{\mu(p^n-1)+\text{min}(p^n-1,\lambda)}
\end{equation}
for each $n\ge 0$.
In all of the examples, we observe that
\[
\# A_2 < \# A_0\times 3^{8}.
\]
Therefore, $\mu(K_\ac/K)=0$.
\end{proof}

\subsection{Conditions for triviality of Iwasawa invariants}
\label{S: Invariants disjoint}

In this section, we provide sufficient conditions to prove that the anti-cyclotomic Iwasawa $\mu$ and $\lambda$-invariants are $0$.
Throughout this section we assume that $K$ is an imaginary quadratic field, there exists exactly one ramified prime in the $\Zp$-extension $K_\infty/K$, and it is totally ramified.
In particular, the results we prove hold when the $p$-Hilbert class field of $K$ is disjoint from $K_{\ac}$.

\subsubsection{}
We begin by recording some results that we use several times for proving our statements.
These results are true for \emph{general} $\Zp$-extensions, not just the anti-cyclotomic one, and they apply to arbitrary number fields $K$.

\begin{lemma}
\label{Gerth lemma 4.2}
Suppose that there exists exactly one ramified prime in the $\Zp$-extension $K_\infty/K$; further suppose that it is totally ramified.
Set $G_n = \Gal\left(K_{n+1}/K_n\right)=\langle \tau_n\rangle$.
Then
\[
\# A_{n+1}^{G_n} = \# A_n.
\]
\end{lemma}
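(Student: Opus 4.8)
The plan is to deduce the equality from Chevalley's ambiguous class number formula applied to the cyclic degree-$p$ extension $K_{n+1}/K_n$, reducing everything to the evaluation of two quantities: the product of ramification indices and the unit-norm index. Recall the formula: for a cyclic extension $L/F$ with $G=\Gal(L/F)$ and unit group $E_F$,
\[
\#\!\left(\Cl(L)^{G}\right)=\#\Cl(F)\cdot\frac{\prod_v e_v}{[L:F]\cdot\left(E_F:E_F\cap N_{L/F}L^\times\right)},
\]
where $v$ runs over all places of $F$ and $e_v$ is the ramification index in $L/F$. I would take $F=K_n$, $L=K_{n+1}$, $G=G_n=\langle\tau_n\rangle$, so that $[L:F]=p$.

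The numerator is immediate from the hypotheses. By assumption exactly one prime $\mathfrak p$ ramifies in $K_\infty/K$ and it is totally ramified, so $e_{\mathfrak p}=p$ while $e_v=1$ at every other finite place. Since $p$ is odd, no archimedean place ramifies (a real place would have to become complex, a degree-$2$ phenomenon incompatible with $[L:F]=p$), so $e_v=1$ there as well. Hence $\prod_v e_v=p$, which cancels the factor $[L:F]=p$ in the denominator.

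The crux is to show that the unit index $\left(E_{K_n}:E_{K_n}\cap N_{K_{n+1}/K_n}K_{n+1}^\times\right)$ equals $1$. By the Hasse norm theorem (available because the extension is cyclic), a unit $u\in E_{K_n}$ is a global norm if and only if it is a local norm at every place. At the unramified finite places the local norm map is surjective on units, and at the archimedean places the local extensions are trivial since $p$ is odd, so in all these cases $u$ is automatically a local norm; the only possible obstruction is at $\mathfrak p$. There I would invoke Artin reciprocity: as $u$ lies in $K_n^\times$, the product of its local Artin symbols over all places is trivial. Because $K_{n+1}/K_n$ is unramified away from $\mathfrak p$ and $u$ is a local unit everywhere, the symbol of $u$ vanishes at every place $v\neq\mathfrak p$, and reciprocity forces the remaining symbol at $\mathfrak p$ to vanish too. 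Thus $u$ is a local norm at $\mathfrak p$, hence a global norm, and the index is $1$. This is the step where the hypothesis that $\mathfrak p$ is the \emph{only} ramified prime is essential, and it is the one I expect to require the most care to phrase cleanly.

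Combining these evaluations collapses the formula to $\#\!\left(\Cl(K_{n+1})^{G_n}\right)=\#\Cl(K_n)$. Finally I would pass to $p$-parts: the Sylow decomposition $\Cl(K_{n+1})=A_{n+1}\times B_{n+1}$, with $B_{n+1}$ the prime-to-$p$ part, is canonical and therefore $G_n$-stable, so $A_{n+1}^{G_n}$ is exactly the Sylow $p$-subgroup of $\Cl(K_{n+1})^{G_n}$. Its order is the $p$-part of $\#\Cl(K_n)$, which is $\#A_n$, giving $\#A_{n+1}^{G_n}=\#A_n$. Apart from the unit-index vanishing, every step is routine bookkeeping.
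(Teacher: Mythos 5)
Your proof is correct and takes the same route as the paper, which simply cites Gerth's Lemma~4.2, itself a direct consequence of Chevalley's ambiguous class number formula; you have merely filled in the details that the citation delegates (the ramification-index count and the vanishing of the unit-norm index via the product formula, local class field theory at the unique ramified prime, and the Hasse norm theorem, followed by passage to $p$-parts). No gaps: this is precisely the standard argument behind the cited result.
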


\begin{proof}
%This generalizes \eqref{Chevalley} (in Section~\ref{S: non-cyclicity}) and 
This follows from Chevalley's formula; see \cite[Lemma~4.2]{Ger77}.
\end{proof}

\begin{theorem}[{\cite[Theorem~1.3(c)]{Ger77}}]
\label{Gerth thm 1.3 c}
Suppose that there exists exactly one ramified prime in the $\Zp$-extension $K_\infty/K$; further suppose that it is totally ramified.
If every ideal class in $A_0$ becomes trivial in some $A_n$, then
\[
\lambda(K_\infty/K) = \mu(K_\infty/K)=0.
\]
\end{theorem}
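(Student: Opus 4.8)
The plan is to pass to the Iwasawa module and reduce the statement to the finiteness of a single $\Lambda$-module, where $\Lambda=\Zp[[T]]$ with $T=\gamma-1$ for a fixed topological generator $\gamma$ of $\Gamma=\Gal(K_\infty/K)$. Because the unique ramified prime is totally ramified, the norm maps $A_{n+1}\to A_n$ are surjective: the Hilbert $p$-class field $L_n$ satisfies $L_n\cap K_{n+1}=K_n$ (an unramified extension inside a totally ramified one), so $L_nK_{n+1}/K_{n+1}$ is unramified with group $A_n$ and hence lies in $L_{n+1}$. One may therefore form $X=\varprojlim_n A_n$, a finitely generated torsion $\Lambda$-module, and since $\mu=\lambda=0$ is equivalent to $X$ being finite, which in turn is equivalent to the orders $\#A_n$ being bounded, it suffices to prove $X$ is finite. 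First I would record the standard structural consequence of the one-totally-ramified-prime hypothesis: writing $\omega_n=(1+T)^{p^n}-1$, the inertia subgroup of the ramified prime in $\Gal(L_\infty/K)$ splits the extension as $\Gal(L_\infty/K)=X\rtimes\Gamma$, and computing the Hilbert $p$-class field of each layer yields a canonical isomorphism $A_n\cong X/\omega_n X$, compatible with the norm maps, which become the natural projections $X/\omega_{n+1}X\to X/\omega_n X$.

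The next step is to translate the capitulation hypothesis into module language. Under the identifications $A_0\cong X/TX$ and $A_n\cong X/\omega_n X$, the natural map $A_0\to A_n$ given by extension of ideals is, by the classical Verlagerung computation, multiplication by $\nu_{n,0}=\omega_n/T=1+\gamma+\cdots+\gamma^{p^n-1}$; that is, the well-defined map $X/TX\to X/\omega_n X$ sending $\bar x\mapsto\overline{\nu_{n,0}x}$, which makes sense because $\nu_{n,0}\cdot TX=\omega_n X$. As a consistency check, composing with the norm $A_n\to A_0$ recovers multiplication by $\nu_{n,0}\equiv p^n\pmod T$ on $A_0$, as it must. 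Since $A_0$ is finite, the hypothesis that every class of $A_0$ capitulates means there is a single $n_0$ for which $A_0\to A_{n_0}$ is the zero map, i.e. $\nu_{n_0,0}X\subseteq\omega_{n_0}X$.

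The finish is then immediate. Set $M=\nu_{n_0,0}X$, a finitely generated $\Lambda$-submodule of $X$. Because $\omega_{n_0}=T\nu_{n_0,0}$ we always have $\omega_{n_0}X=TM\subseteq M$, while the hypothesis gives $M=\nu_{n_0,0}X\subseteq\omega_{n_0}X=TM$; hence $M=TM$. Since $(T)$ lies in the Jacobson radical $\mathfrak m=(p,T)$ of the local ring $\Lambda$, Nakayama's lemma forces $M=0$. Thus $\nu_{n_0,0}X=0$, and a fortiori $\omega_{n_0}X=0$, so $A_n=X/\omega_n X=X$ for every $n\ge n_0$. In particular $\#A_n$ is eventually constant, hence bounded, so $X$ is finite and $\mu(K_\infty/K)=\lambda(K_\infty/K)=0$.

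I expect the main obstacle to lie in the two structural inputs of the first two paragraphs rather than in the final deduction: namely, pinning down the isomorphism $A_n\cong X/\omega_n X$ in the totally ramified case and, above all, verifying that the capitulation map is \emph{exactly} multiplication by $\nu_{n,0}$ (the transfer computation). Both are classical, and the norm-surjectivity used to build $X$ is in the same spirit as Lemma~\ref{Gerth lemma 4.2}; once these are in place, the Nakayama argument closes the proof with no further work.
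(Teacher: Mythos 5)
Your proof is correct, but note that the paper itself offers no proof of this statement at all: it is quoted directly from Gerth \cite[Theorem~1.3(c)]{Ger77}, so your argument is a genuine reconstruction rather than a parallel of anything in the text. All three of your structural inputs are sound. The identification $A_n\simeq X/\omega_n X$ in the one-prime, totally ramified case is \cite[Proposition~13.22]{Was97}, which the paper itself invokes in the proof of Proposition~\ref{prop: nakayama}; the fact that the ideal-extension (capitulation) map $A_0\to A_n$ becomes multiplication by $\nu_{n,0}=\omega_n/T$ under these identifications is the classical compatibility of the Artin map with the transfer, combined with the transfer formula for a normal subgroup with cyclic quotient (this is in Iwasawa's foundational papers, and you are right that it is the one step demanding real care); and the reduction to a single level $n_0$ is valid because ideal extension is transitive and $A_0$ is finite, so the capitulation kernels increase and eventually exhaust $A_0$. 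The Nakayama finish is airtight: $TM=\omega_{n_0}X\supseteq\nu_{n_0,0}X=M\supseteq TM$ forces $M=TM$, hence $M=0$ by Nakayama, hence $\omega_{n_0}X=0$; then $X\simeq A_{n_0}$ is finite because $A_{n_0}$ is a class group, which bounds $\#A_n$ and kills both $\lambda$ and $\mu$. Your argument even proves slightly more than the statement: $A_n\simeq X$ stabilizes for all $n\ge n_0$. It is also worth observing how close your route is to the paper's own Proposition~\ref{prop: nakayama}, which uses the same module identification and the same Nakayama mechanism but with the hypothesis $A_n\simeq A_{n+1}$ in place of capitulation; the two results are essentially two faces of one argument, with yours requiring the extra (classical) input that capitulation is multiplication by $\nu_{n,0}$.
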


The norm map from $A_{n+1}$ to $A_n$ is surjective since we are assuming throughout that $K_{n+1}/K_n$ is totally ramified.
The composition of this with the natural map from $A_n$ to $A_{n+1}$ yields 
\begin{equation}
\begin{split}
\label{eqn: compose}
A_{n+1} &\longrightarrow A_{n+1}\\  
[J] &\mapsto \left( 1+ \tau_n + \ldots + \tau_n^{p-1}\right)[J].
\end{split}
\end{equation}

%\subsubsection{}
%Over the course of the next three results we provide sufficient conditions to ensure that the anti-cyclotomic Iwasawa $\mu$ and %$\lambda$-invariants are trivial when the $p$-Hilbert class field of $K$ is disjoint from the anti-cyclotomic tower.

\begin{proposition}
\label{prop: nakayama}
Suppose that there exists exactly one ramified prime in the $\Zp$-extension $K_\infty/K$; further suppose that it is totally ramified.
Suppose that $A_n\simeq A_{n+1}$ for some $n\geq 0$.
Then $\lambda(K_{\infty}/K)=\mu(K_\infty/K)=0$.
\end{proposition}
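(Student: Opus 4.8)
The plan is to translate the hypothesis into the language of Iwasawa modules and finish with Nakayama's lemma. Write $\Gamma=\Gal(K_\infty/K)$, fix a topological generator $\gamma$, and identify the Iwasawa algebra $\Lambda=\Zp[[\Gamma]]$ with $\Zp[[T]]$ via $T=\gamma-1$; let $\fm=(p,T)$ be its maximal ideal and set $\omega_m=(1+T)^{p^m}-1$. Let $X=\varprojlim_m A_m$ be the inverse limit under the norm maps, which is a finitely generated torsion $\Lambda$-module. Because exactly one prime ramifies in $K_\infty/K$ and it is totally ramified, the standard structure theory gives isomorphisms $A_m\cong X/\omega_m X$ under which the norm $A_{m+1}\to A_m$ becomes the natural projection $X/\omega_{m+1}X\twoheadrightarrow X/\omega_m X$ (this is well defined since $\omega_m\mid\omega_{m+1}$, and its surjectivity is exactly the norm-surjectivity recorded before the statement). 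With this dictionary, the hypothesis $A_n\simeq A_{n+1}$ says precisely that $\#(X/\omega_n X)=\#(X/\omega_{n+1}X)$.

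First I would upgrade this equality of orders to an equality of submodules. The projection $X/\omega_{n+1}X\to X/\omega_n X$ is surjective with kernel $\omega_n X/\omega_{n+1}X$; as both groups are finite of the same order, the kernel is trivial, so $\omega_n X=\omega_{n+1}X$. Writing $\omega_{n+1}=\theta\,\omega_n$ with $\theta=\omega_{n+1}/\omega_n=\sum_{i=0}^{p-1}(1+T)^{ip^n}$, this reads $M=\theta M$ for the submodule $M:=\omega_n X$. Setting $T=0$ shows $\theta$ has constant term $p$, hence $\theta\in\fm$; this $\theta$ is exactly the element inducing the composition map of \eqref{eqn: compose}, that is multiplication by $1+\tau_n+\cdots+\tau_n^{p-1}$ on $A_{n+1}$.

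Now I would invoke Nakayama. As a submodule of the finitely generated module $X$ over the Noetherian local ring $\Lambda$, $M$ is finitely generated, and $M=\theta M\subseteq\fm M\subseteq M$ forces $\fm M=M$; Nakayama's lemma then yields $M=0$, i.e.\ $\omega_n X=0$. Since $\omega_n\mid\omega_m$ for $m\geq n$, this gives $A_m\cong X/\omega_m X=X$ for all such $m$, so $X$ is finite and the orders $\#A_m$ stabilize; a finite Iwasawa module has $\Zp$-rank $0$ and trivial characteristic ideal, whence $\lambda(K_\infty/K)=\mu(K_\infty/K)=0$.

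The step deserving the most care is the passage from the abstract hypothesis $A_n\simeq A_{n+1}$ to the module identity $\omega_n X=\omega_{n+1}X$: it relies on the \emph{exactness} of $A_m\cong X/\omega_m X$, which holds here because there is a single, totally ramified prime (so no finite-submodule correction intrudes), together with the surjectivity of the norm. Once $\omega_n X=\omega_{n+1}X$ is secured, the Nakayama step is immediate. I should also note a purely finite-level route to the same input: Lemma~\ref{Gerth lemma 4.2} gives $\#A_{n+1}^{G_n}=\#A_n=\#A_{n+1}$, forcing $\tau_n$ to act trivially on $A_{n+1}$, so that $\theta$ acts there as multiplication by $p$; this is the finite-layer shadow of the relation $M=\theta M$ exploited above.
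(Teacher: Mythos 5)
Your proof is correct and takes essentially the same route as the paper's: identify $A_m \cong X/\omega_m X$ with $\omega_m = (1+T)^{p^m}-1$ (valid because the unique ramified prime is totally ramified, i.e.\ Washington's Proposition~13.22), then apply Nakayama's lemma to conclude $\omega_n X = 0$, so $X$ is finite and $\lambda(K_\infty/K)=\mu(K_\infty/K)=0$. Your write-up is in fact more careful than the paper's terse version, since you explicitly upgrade the abstract isomorphism $A_n\simeq A_{n+1}$ to the module identity $\omega_n X=\omega_{n+1}X$ and check that $\theta=\omega_{n+1}/\omega_n$ lies in the maximal ideal $(p,T)$ before invoking Nakayama.
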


\begin{proof}
Write $L=\bigcup_{n\geq} L(K_n)$ and $X=\Gal\left(L/K_\infty\right)$.
Since there is a unique prime above $p$ in $K$ and this prime is totally ramified we know that (see \cite[Proposition~13.22]{Was97})
\[
A_n = X/\left(\left(1+T\right)^{p^n} - 1\right)X.
\]
Since we are assuming that $A_n = A_{n+1}$, therefore
\[
X/\left(\left(1+T\right)^{p^n} - 1\right)X = X/\left(\left(1+T\right)^{p^{n+1}} - 1\right)X.
\]
Nakayama's Lemma now implies that $\left(\left(1+T\right)^{p^n} - 1\right)X=0$; therefore, $X$ must be finite.
\end{proof}

\begin{remark}
This proposition allows us to take care of, for example, the cases $d=2856$ and $3048$ in Table~\ref{T: ramify}.
\end{remark}

\begin{proposition}
\label{prop: elementary}
Suppose that there exists exactly one ramified prime in the $\Zp$-extension $K_\infty/K$; further suppose that it is totally ramified.
Suppose that for some $n$ both $A_n$ and $A_{n+1}$ are elementary $p$-groups satisfying
\[
\rank_p \left(A_{n+1}\right) \leq \rank_p \left(A_{n}\right) + p-2.
\]
Then $\lambda(K_{\infty}/K)=\mu(K_\infty/K)=0$.
\end{proposition}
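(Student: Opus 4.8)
The plan is to exploit the action of $G_n=\Gal(K_{n+1}/K_n)=\langle\tau_n\rangle$ on $A_{n+1}$ together with the capitulation criterion of Theorem~\ref{Gerth thm 1.3 c}. Since $A_{n+1}$ is assumed to be an elementary $p$-group, it is a module over the group ring $\F_p[G_n]$. Writing $\sigma=\tau_n-1$ and using $\tau_n^p-1=(\tau_n-1)^p=\sigma^p$ in characteristic $p$, we have $\F_p[G_n]\cong\F_p[\sigma]/(\sigma^p)$, a local principal ideal ring. By the structure theorem for finitely generated modules over the PID $\F_p[\sigma]$ annihilated by $\sigma^p$, I would decompose
\[
A_{n+1}\;\cong\;\bigoplus_{j=1}^{p}\bigl(\F_p[\sigma]/(\sigma^j)\bigr)^{a_j}
\]
for multiplicities $a_j\ge 0$.

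First I would read off two dimension counts from this decomposition. Each summand $\F_p[\sigma]/(\sigma^j)$ has $\F_p$-dimension $j$, so $\rank_p(A_{n+1})=\sum_{j=1}^p j\,a_j$. The $G_n$-fixed subspace $A_{n+1}^{G_n}=\ker\sigma$ meets each summand in a one-dimensional space (spanned by the class of $\sigma^{j-1}$), so $\rank_p\bigl(A_{n+1}^{G_n}\bigr)=\sum_{j=1}^p a_j$; by Lemma~\ref{Gerth lemma 4.2} this equals $\rank_p(A_n)$. Subtracting the two identities gives
\[
\rank_p(A_{n+1})-\rank_p(A_n)=\sum_{j=1}^p (j-1)\,a_j,
\]
and the hypothesis $\rank_p(A_{n+1})\le\rank_p(A_n)+p-2$ forces $\sum_{j=1}^p (j-1)a_j\le p-2$. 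Since the $j=p$ term alone contributes $(p-1)a_p$, any $a_p\ge 1$ would already exceed $p-2$; hence $a_p=0$, i.e.\ $A_{n+1}$ has no free $\F_p[G_n]$-summand.

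Next I would translate $a_p=0$ into capitulation. The norm element $\nu=1+\tau_n+\cdots+\tau_n^{p-1}$ satisfies $\sigma\nu=\tau_n^p-1=0$, and in fact $\nu=\sigma^{p-1}$ in $\F_p[\sigma]/(\sigma^p)$; since $\sigma^{p-1}$ annihilates every summand $\F_p[\sigma]/(\sigma^j)$ with $j\le p-1$, the vanishing $a_p=0$ shows that $\nu$ acts as $0$ on $A_{n+1}$. But the map~\eqref{eqn: compose} is precisely multiplication by $\nu$, realized as the norm $A_{n+1}\twoheadrightarrow A_n$ followed by the natural map $A_n\to A_{n+1}$. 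As the norm is surjective under the total ramification hypothesis, the vanishing of this composite forces the natural map $A_n\to A_{n+1}$ to be identically zero. Consequently the natural map $A_0\to A_{n+1}$, which factors through $A_0\to A_n\to A_{n+1}$, is also zero, so every ideal class of $A_0$ becomes trivial in $A_{n+1}$. Theorem~\ref{Gerth thm 1.3 c} then yields $\lambda(K_\infty/K)=\mu(K_\infty/K)=0$.

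The only genuinely delicate point is the identification of the norm element with $\sigma^{p-1}$ and the resulting observation that killing off the free summand ($a_p=0$) makes $\nu$ act as zero; everything else is bookkeeping with the $\F_p[G_n]$-decomposition. I expect the remaining steps---the dimension counts and the deduction of capitulation from surjectivity of the norm---to be routine, so the crux is really the elementary modular representation theory of $\Z/p\Z$ that pins down $a_p=0$.
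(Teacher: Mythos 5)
Your proof is correct and takes essentially the same route as the paper's: your $\F_p[\sigma]/(\sigma^p)$-module decomposition is the Jordan form of $\tau_n$ in different language, your conclusion $a_p=0$ is the paper's observation that no Jordan block has size exceeding $p-1$, and both arguments finish identically by showing the norm element annihilates $A_{n+1}$, so the natural map $A_n\to A_{n+1}$ is zero and Theorem~\ref{Gerth thm 1.3 c} applies. The only cosmetic difference is that you invoke the characteristic-$p$ identity $1+\tau_n+\cdots+\tau_n^{p-1}=(\tau_n-1)^{p-1}$, whereas the paper expands $(I+N)^k$ and uses $\sum_{k=j}^{p-1}\binom{k}{j}=\binom{p}{j+1}\equiv 0\pmod{p}$ to reach the same vanishing.
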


\begin{proof}
Write $\tau_n$ for a generator of $\Gal(K_{n+1}/K_n)=G_n$.
By Lemma~\ref{Gerth lemma 4.2}, $\# A_{n+1}^{G_n} = \# A_n$.

Consider $A_{n+1}$ as a vector space over $\F_p$ and $\tau_n$ as a linear transformation such that $\tau_n^p =1$.
Note that $1$ is the only eigenvalue of $\tau_n$.

The assumption on ranks implies that the $1$-eigenspace of $\tau_n$ has codimension $\leq p-2$.
If we now look at the Jordan form of $\tau_n$, then no Jordan block has size larger than $p-1$.
Write
\[
\tau_n = I + N \text{ satisfying } N^{p-1}=0.
\]
Hence,
\begin{equation}
\begin{split}
\label{eqn: sum}
1 + \tau_n + \ldots + \tau_n^{p-1} &= I + (I+N) + (I+N)^2 + \ldots + (I+N)^{p-1}\\
& = pI + \sum_{k=1}^{p-1}\sum_{j=1}^{p-2} {{k}\choose{j}}N^j.
\end{split}
\end{equation}
Observe that for a fixed $j\leq p-2$,
\[
\sum_{k=1}^{p-1}{k\choose j} = \sum_{k=j}^{p-1}{k \choose j} = {p\choose {j+1}} \equiv 0 \pmod{p}.
\]
%Since $j\leq p-2$, it follows that numbers less than $j+1$ will never divide $p$.
% https://math.stackexchange.com/questions/726606/combinatorics-proof-of-sum-of-k-choose-m-with-k-from-m-up-to-n-is-equal-to-n#:~:text=to%20the%20top-,Combinatorics%20proof%20of%20%22sum%20of%20(k%20choose%20m)%20with,%2B1%20choose%20m%2B1%22
Putting this all together we rewrite \eqref{eqn: sum} as follows:
\[
1 + \tau_n + \ldots + \tau_n^{p-1}\equiv 0 \pmod{p}.
\]
In particular, the map given in \eqref{eqn: compose} is a multiple of $p$.
By assumption, $A_{n+1}$ is an elementary $p$-group; hence it is annihilated under this map.
But, the norm map is surjective; hence we conclude that $A_n \rightarrow A_{n+1}$ is the $0$-map.
By Theorem~\ref{Gerth thm 1.3 c} the result follows.
\end{proof}

\begin{proposition}
\label{prop: specific}
Suppose that there exists exactly one ramified prime in the $\Zp$-extension $K_\infty/K$; further suppose that it is totally ramified.
Suppose that 
\[
A_0 \simeq \Z/p^2\Z, \quad A_1 = \Z/p^2\Z \times \Z/p\Z, \quad A_2 = \Z/p^2\Z \times \Z/p^2\Z.
\]
Then,
\[
\lambda(K_\infty/K) = \mu(K_\infty/K) =0.
\]
\end{proposition}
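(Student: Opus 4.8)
The plan is to prove that the natural (inclusion) map $A_0 \to A_2$ is identically zero; once every ideal class of $A_0$ is known to become trivial in $A_2$, Theorem~\ref{Gerth thm 1.3 c} immediately yields $\lambda = \mu = 0$. The whole argument reduces to understanding, for $n = 0$ and $n = 1$, the action of $\tau_n - 1$ on $A_{n+1}$ together with the trace element $\nu_n = 1 + \tau_n + \cdots + \tau_n^{p-1}$ appearing in \eqref{eqn: compose}.

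First I would pin down the image of $N_n := \tau_n - 1$. By Lemma~\ref{Gerth lemma 4.2} we have $\#A_{n+1}^{G_n} = \#\ker N_n = \#A_n$, so $\#\,\mathrm{im}\,N_n = \#A_{n+1}/\#A_n = p$ for both $n=0$ (here $\#A_1/\#A_0 = p^3/p^2$) and $n=1$ (here $\#A_2/\#A_1 = p^4/p^3$). Thus $\mathrm{im}\,N_n$ is a group of order $p$, hence isomorphic to $\Z/p\Z$.

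The key step, and the one I expect to be the main obstacle, is to show that $N_n^2 = 0$ on $A_{n+1}$. Suppose not; then $\mathrm{im}\,N_n^2 \subseteq \mathrm{im}\,N_n$ is nonzero, and since $\mathrm{im}\,N_n \cong \Z/p\Z$ has no proper nonzero subgroup, $N_n$ restricts to a surjective, hence bijective, endomorphism of $\mathrm{im}\,N_n$, acting as multiplication by some $c \in (\Z/p\Z)^\times$. Then $\tau_n = 1 + N_n$ acts on $\mathrm{im}\,N_n$ as multiplication by $1 + c$, and the relation $\tau_n^p = 1$ forces $(1+c)^p \equiv 1 \pmod p$; but Fermat's little theorem gives $(1+c)^p \equiv 1 + c \pmod p$, whence $c \equiv 0$, contradicting $c \in (\Z/p\Z)^\times$. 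Therefore $N_n^2 = 0$.

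With $N_n^2 = 0$ the trace element collapses: $\nu_n = \sum_{i=0}^{p-1}(1 + N_n)^i = p + \binom{p}{2}N_n = p\bigl(1 + \tfrac{p-1}{2}N_n\bigr)$, and the factor $1 + \tfrac{p-1}{2}N_n$ is an automorphism of $A_{n+1}$ since $N_n$ is nilpotent, so $\nu_n A_{n+1} = pA_{n+1}$. Because $K_{n+1}/K_n$ is totally ramified the norm $A_{n+1}\to A_n$ is surjective, so the factorization \eqref{eqn: compose} identifies the image of the natural map $A_n\to A_{n+1}$ with $\nu_n A_{n+1} = pA_{n+1}$. Writing $\iota_{01},\iota_{12}$ for the natural maps, $A_0\to A_2$ equals $\iota_{12}\circ\iota_{01}$, and its image is $\iota_{12}(pA_1) = p\,\iota_{12}(A_1) = p\cdot pA_2 = p^2 A_2$, which vanishes because $A_2\cong(\Z/p^2\Z)^2$ is annihilated by $p^2$. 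Hence every class of $A_0$ becomes trivial in $A_2$, and Theorem~\ref{Gerth thm 1.3 c} gives $\lambda(K_\infty/K)=\mu(K_\infty/K)=0$.
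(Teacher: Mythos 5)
Your proof is correct, and it reaches the conclusion by the same overall skeleton as the paper --- surjectivity of the norm plus \eqref{eqn: compose} to identify the image of the natural map $A_n\to A_{n+1}$ with $\nu_n A_{n+1}$, then chaining two levels to get $p^2A_2=0$ and invoking Theorem~\ref{Gerth thm 1.3 c} --- but the central technical step is handled by a genuinely different and cleaner device. The paper fixes explicit generators, writes $\tau_n$ as a $2\times 2$ matrix, and runs a case analysis on the two possible structures of $A_1^{G}$ (namely $\Z/p\Z\times\Z/p\Z$ or $\Z/p^2\Z$), verifying by hand that $I+M+\cdots+M^{p-1}=pI$ at each level. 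You instead observe from Lemma~\ref{Gerth lemma 4.2} that $\#\,\mathrm{im}(\tau_n-1)=\#A_{n+1}/\#A_n=p$, deduce $(\tau_n-1)^2=0$ from the fact that an automorphism of order dividing $p$ cannot act nontrivially on a group of order $p$, and then get $\nu_n=p\bigl(1+\tfrac{p-1}{2}N_n\bigr)=p\cdot(\text{automorphism})$, so $\nu_nA_{n+1}=pA_{n+1}$ with no basis choices or case splits. What your route buys is generality and economy: it never uses the precise isomorphism types of $A_0,A_1,A_2$, only that $\#A_1=p\,\#A_0$, $\#A_2=p\,\#A_1$, and that $A_2$ has exponent dividing $p^2$, so the same argument covers other configurations (e.g.\ non-cyclic $A_0$) that the paper's matrix computation would have to redo; it is also closer in spirit to the paper's Proposition~\ref{prop: elementary} (nilpotency of $\tau_n-1$) than to its proof of this proposition. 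What the paper's computation buys is the sharper exact identity $\nu_n=pI$ (not merely $p$ times a unit), and an explicit description of the possible matrices $M$, which is occasionally useful elsewhere; but for the purpose of this proposition your weaker statement suffices.
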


\begin{proof}
The elements of $A_1$ can be written as column vectors $\begin{pmatrix}x \\ y \end{pmatrix}$ with $x\in \Z/p^2\Z$ and $y\in \Z/p\Z$.
Let $\tau_1$ be a generator for $\Gal(K_1/K)=G$.
The action of $\tau_1$ on $A_1$ is via the matrix $M = \begin{pmatrix}
a & b\\ c& d
\end{pmatrix}$ such that
\[
\tau_1 \begin{pmatrix}
1 \\ 0
\end{pmatrix} = \begin{pmatrix}
a \\ c
\end{pmatrix} \text{ and }
\tau_1 \begin{pmatrix}
0 \\ 1
\end{pmatrix} = \begin{pmatrix}
b \\ d
\end{pmatrix}.
\]
Since $\begin{pmatrix}
0 \\ 1
\end{pmatrix}$ is an element of order $p$, it forces that $b\equiv 0 \pmod{p}$.

By Lemma~\ref{Gerth lemma 4.2}, we know that
\[
A_1^{G} \simeq \Z/p\Z \times \Z/p\Z \text{ or } A_1^{G} \simeq \Z/p^2\Z.
\]
We claim that in both cases, $1+\tau_1+\tau_1^2+\cdots + \tau_1^{p-2}=p$.
\medskip

\noindent \emph{Case 1 -- $A_1^{G} \simeq \Z/p\Z \times \Z/p\Z$}:
We may choose the fixed elements of $A_1$ to be the column vectors $\begin{pmatrix}
p \\0
\end{pmatrix}$ and $\begin{pmatrix}
0 \\1
\end{pmatrix}$
and this forces
\[
M = \begin{pmatrix}
a & 0\\ c& 1
\end{pmatrix} \text{ such that } a\equiv 1\pmod{p}.
\]
Note that for $0\leq j \leq p-1$,
\[
M^j =  \begin{pmatrix}
a^j & 0\\ \sum_{i=0}^{j-1} a^i c& 1\end{pmatrix} = \begin{pmatrix} a^j & 0 \\ jc & 1\end{pmatrix}
\]
since $a\equiv 1\pmod p$.
Therefore, we calculate that
\[
I + M + \ldots + M^{p-1} = p I.
\]
\noindent \emph{Case 2 -- $A_1^{G} \simeq \Z/p^2\Z$}:
There is a fixed column vector of order $p^2$ and we may choose
\[
M = \begin{pmatrix}
1 & b\\ 0& d
\end{pmatrix}.
\]
Since $M^p = I$, this forces $d=1$.
Note that for $0\leq j \leq p-1$,
\[
M^j =  \begin{pmatrix}
1 & jb\\ 0 & 1
\end{pmatrix}.
\]
Therefore, we obtain that
\[
I + M + \ldots + M^{p-1} = p I.
\]

Continuing with the proof, let $[J_0]$ be an ideal class in $A_0$ and $[J_1]$ be an ideal class in $A_1$ satisfying
\[
\Norm_{K_1/K}([J_1])=[J_0].
\]
Here we have used the fact that $\Norm_{K_1/K}$ is surjective since the extension is totally ramified.
On the other hand, the image of $\Norm_{K_1/K}[J_1]=[J_0]$ in $A_1$ is
\[
\left(I + M + \ldots + M^{p-1} \right)[J_1]=p[J_1].
\]

Suppose that $\langle \tau_2\rangle = \Gal(K_2/K_1)=G_2$ and $A_2\simeq \Z/p^2\Z \times \Z/p^2\Z$.
The action of $\tau_2$ on $A_2$ can be understood via an invertible $2\times 2$ matrix $M_2$ with coefficients in $\Z/p^2\Z$.
Lemma~\ref{Gerth lemma 4.2} implies that there are fixed column vectors of order $p^2$ and order $p$, which we choose to be $\begin{pmatrix}
1 \\ 0
\end{pmatrix}$ and $\begin{pmatrix}
0 \\ p
\end{pmatrix}$, respectively.
Therefore, we choose
\[
M_2 = \begin{pmatrix}
1 & b\\ 0 & d
\end{pmatrix} \text{ with }b\equiv 0 \pmod{p} \text{ and }d\equiv 1\pmod{p}.
\]
Note that once again for $0\leq j \leq p-1$,
\[
M_2^j =  \begin{pmatrix}
1 & \sum_{i=0}^{j-1}bd^i\\ 0 & d^j
\end{pmatrix}
=  \begin{pmatrix}
1 & jb\\ 0 & d^j
\end{pmatrix}.
\]
Therefore, we obtain that
\[
I + M_{2} + \ldots + M_{2}^{p-1} = p I.
\]

Working with the class $[J_1]$ from before, we know there exists a class $[J_2]$ in $A_2$ such that
\[
[J_1] = \Norm_{K_2/K}[J_2].
\]
As before, $[J_1]\mapsto p[J_2]$ in $K_2$.
Therefore,
\[
[J_0] \mapsto p[J_1] \mapsto p^2[J_2]=0.
\]
The result follows from Theorem~\ref{Gerth thm 1.3 c}.
\end{proof}

\begin{remark}\leavevmode
\begin{enumerate}[(i)]
\item Theorem~\ref{mu 0 lambda 0 maybe} and Propositions~\ref{prop: elementary} and \ref{prop: specific} show that $\lambda(K_\ac/K)=\mu(K_\ac/K)=0$ for all imaginary quadratic fields considered in Tables~\ref{T: inert} and \ref{T: ramify} except when $d=1048$, $1419$, or $1843$.
\item The above propositions do not apply to the case $d=1048$.
However, the prime above $7$ generates $A_0$ and this prime becomes principal in $A_2$.
By Theorem~\ref{Gerth thm 1.3 c}, we know that 
\[
\lambda(K_\ac/K) = \mu(K_\ac/K)=0.
\]
\item Now, consider the case $d=1843$.
The prime above $11$ generates $A_0$ but it does not become principal in $A_2$.
So, we can not draw such a conclusion.
The result of Sands mentioned in the proof of Theorem~\ref{mu 0 lambda 0 maybe} implies that $\lambda \le 6$.
It appears possible in this case that $\lambda(K_{\ac}/K)=2$. 
But, it is perhaps unlikely that this is an exceptional case where $\lambda(K_{\ac}/K) > 0$; we have not yet been able to compute the image of $A_0$ in $A_3$.

Similar remarks also hold for the case $d=1419$.
\end{enumerate}
\end{remark}

\subsection{Data for the case that \texorpdfstring{$A_0$}{} is non-cyclic}
\label{S: non-cyclic base field data}
Here are a few cases where $A_0$ is non-cyclic and the $3$-Hilbert class field is disjoint from $K_{\ac}$.

\begin{center}
\renewcommand{\arraystretch}{1.25}
\setlength{\aboverulesep}{0pt}
\setlength{\belowrulesep}{0pt}
\begin{longtable}{ |c|c|c|c| }\caption{$A_0$ is non-cyclic}\label{T: non-cyclic}\\\toprule
$d$ & $A_0$ & $A_1$ & $A_2$ \\ \midrule
50983 & $3\times 3$ & $9 \times 3\times 3\times 3$ & $9\times 9 \times 3\times 3$ \\
63079 & $3\times 3$ & $9 \times 3\times 3\times 3$ & $9\times 9 \times 3\times 3$ \\
141412 & $3\times 3$ & $9 \times 3\times 3\times 3$ & $27\times 9 \times 3\times 3$ \\
\bottomrule
\end{longtable}
\renewcommand{\arraystretch}{1}
\end{center}
The results proven in Section~\ref{S: Invariants disjoint} can not be applied to these examples.
Here, we do not prove general results but only comment on these specific examples.
\begin{enumerate}[(a)]
\item In each of these cases, $\mu(K_\ac/K)=0$ by the same argument as in Theorem~\ref{mu 0 lambda 0 maybe}.
\item When $d=50983$, the $10^{\text{th}}$ powers of the primes above $13$ and $23$ generate $A_0$.
They both become principal in $A_2$.
Theorem~\ref{Gerth thm 1.3 c} implies that 
\[
\mu(K_\ac/K)=\lambda(K_\ac/K)=0.
\]
\item When $d=63079$, the $17^{\text{th}}$ powers of the primes above $5$ and $41$ generate $A_0$.
They both become principal in $A_2$.
Once again, Theorem~\ref{Gerth thm 1.3 c} implies that 
\[
\mu(K_\ac/K)=\lambda(K_\ac/K)=0.
\]
\item When $d=141412$, the $8^{\text{th}}$ powers of the primes above $7$ and $43$ generate $A_0$.
The $8^{\text{th}}$-power of the prime above $43$ becomes principal in $A_2$ but the $8^{\text{th}}$-power of the prime above $7$ has order $3$ in $A_2$.
Therefore, we do not draw any conclusions regarding the $\lambda$-invariant from this.
The result of Sands mentioned in the proof of Theorem~\ref{mu 0 lambda 0 maybe} implies that $\lambda(K_{\ac}/K) \le 5$.
\end{enumerate}

\section{Data and Calculations: \texorpdfstring{$p$-Hilbert class field partially intersects $K_{\ac}$}{}}
\label{S: partially disjoint}

In this section, we first present the table where the $p$-Hilbert class field partially intersects $K_\ac$.

\begin{center}
\renewcommand{\arraystretch}{1.25}
\setlength{\aboverulesep}{0pt}
\setlength{\belowrulesep}{0pt}
\begin{longtable}{ |c|c|c|c||c|c|c|c| }\caption{$p$-Hilbert class field of $K$ intersects with $K_{\ac}$ partially}\label{T: partial}\\\toprule
$d$ & $A_0$ & $A_1$ & $A_2$ & $d$ & $A_0$ & $A_1$ & $A_2$ \\ \midrule
367 & 9 & 3 & $9\times 3$ & 6883 & 9 & 3 & $9\times 3$ \\
1087 & 9 & 3 & $9\times 3$ & 7908 & 9 & 3 & $9 \times 3$\\
1291 & 9 & 3 & $9\times 3$ & 13092 & 9 & 3 & $9 \times 3$\\
3547 & 9 & 3 & $3\times 3\times 3\times 3$ & 36276 & $3\times 3$ & $3\times 3\times 3$ & $9\times 9\times 9\times 3$\\
4012 & 9 & 3 & $9\times 3$ & 49128 & $3\times 3$ & $9\times 3$ & $27\times 9$ \\
6607 & 9 & 3 & $9\times 3$ & 49812 & $3\times 3$ & $9\times 3$ & $27\times 9$ \\
6871 & 9 & 3 & $9\times 3$ & 58920 & $3\times 3$ & $9\times 3$ & $27\times 9$\\
\bottomrule
\end{longtable}
\renewcommand{\arraystretch}{1}
\end{center}

In this next proposition we provide a proof of an observation one is likely to make immediately from the above table.
As before, write $L(K_n)$ to denote the $p$-Hilbert class field of $K_n$.
%the $n$-th layer of the anti-cyclotomic tower.

\begin{proposition}
Suppose that $A_0$ is cyclic and $L(K)\supseteq K_n$ for some $n\geq 0$.
Then $L(K_n) = L(K)$.
\end{proposition}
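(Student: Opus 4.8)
The plan is to prove the claimed equality by establishing the two inclusions $L(K)\subseteq L(K_n)$ and $L(K_n)\subseteq L(K)$ separately; the first holds quite generally, while the second is where the cyclicity of $A_0$ enters decisively.

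First I would dispose of the inclusion $L(K)\subseteq L(K_n)$. Since $K_n\subseteq L(K)$ and $L(K)/K$ is unramified and abelian (recall $\Gal(L(K)/K)\simeq A_0$), the extension $L(K)/K_n$ is again an unramified abelian $p$-extension, so by definition it is contained in the $p$-Hilbert class field $L(K_n)$ of $K_n$. I would record here the byproduct that $K_n/K$ is itself unramified, being a subextension of $L(K)/K$. This is the only place the hypothesis $L(K)\supseteq K_n$ is used, and it is essential, since $p$ is typically ramified somewhere in $K_\ac/K$; the point of the hypothesis is that the ramification only begins above $K_n$.

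For the reverse inclusion, I would argue that $L(K_n)/K$ is an unramified $p$-extension of $K$. Indeed $L(K_n)/K_n$ is unramified by construction and $K_n/K$ is unramified by the previous paragraph, so the composite $L(K_n)/K$ is unramified; moreover it is Galois, because the Hilbert $p$-class field is functorial and $K_n/K$ is Galois. Hence $L(K_n)$ lies inside the maximal unramified pro-$p$ extension $L_\infty$ of $K$ (there are no archimedean subtleties here, as $K$ is totally imaginary, so $p$ odd precludes ramification at the infinite place). It then remains only to identify $L_\infty$.

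The crux, where I expect the real content to lie, is the claim that $L_\infty=L(K)$ precisely because $A_0$ is cyclic. The group $\Gal(L_\infty/K)$ is pro-$p$, and its maximal abelian quotient is $\Gal(L(K)/K)\simeq A_0$. By the Burnside basis theorem, a pro-$p$ group whose abelianization is procyclic is itself procyclic, hence abelian; therefore $\Gal(L_\infty/K)=A_0$ and $L_\infty=L(K)$. Equivalently, the $p$-class field tower of $K$ terminates at the first step when $A_0$ is cyclic. Combining this with the previous paragraph yields $L(K_n)\subseteq L_\infty=L(K)$, and together with the first inclusion we conclude $L(K_n)=L(K)$. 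The one point to handle with care is this termination argument: it is exactly the cyclicity of $A_0$ that forces abelianness of the full unramified pro-$p$ Galois group, and without it $L(K_n)$ could properly contain $L(K)$.
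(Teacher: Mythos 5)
Your proof is correct, and it takes a genuinely different route from the paper's. The paper argues by contradiction inside the finite group $\Gal(L(K_n)/K)$: if $L(K)\subsetneq L(K_n)$, it picks a subgroup $N$ of $\Gal(L(K_n)/L(K))$ of index $p$ that is normal in $\Gal(L(K_n)/K)$, observes that the lifted conjugation action of the cyclic quotient $A_0$ on the order-$p$ group $\Gal(L(K_n)/L(K))/N$ is trivial (its automorphism group has order $p-1$, prime to $p$), so $\Gal(L(K_n)/K)/N$ is a central extension of a cyclic group and hence abelian; the fixed field $F$ of $N$ is then an unramified abelian extension of $K$ strictly containing $L(K)$, contradicting maximality of $L(K)$. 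You instead prove the stronger statement that the maximal unramified pro-$p$ extension $L_\infty$ of $K$ equals $L(K)$ (the $p$-class field tower stops at the first step when $A_0$ is cyclic) via the Burnside basis theorem, and then simply observe $L(K_n)\subseteq L_\infty$. The two arguments share the same group-theoretic kernel --- a pro-$p$ group with cyclic abelianization is cyclic; the paper's central-extension step is precisely the finite, one-step version of this fact --- and both use the hypothesis $K_n\subseteq L(K)$ only to make $K_n/K$, hence $L(K_n)/K$, unramified. What your route buys is a cleaner, $n$-independent, stronger conclusion (every unramified $p$-extension of $K$ lies in $L(K)$); what it costs is an appeal to the Frattini/Burnside argument for a pro-$p$ group not known a priori to be finitely generated, a point you should justify or cite --- though it is harmless here, since $\Gal(L_\infty/K)^{\mathrm{ab}}\simeq A_0$ is finite, so the Frattini quotient is cyclic of order at most $p$ and a single topological generator suffices. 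The paper's route, by contrast, stays entirely within elementary finite group theory. Your supporting steps (that $L(K_n)/K$ is unramified, Galois by conjugation-equivariance of the $p$-Hilbert class field, and of $p$-power degree) are all correctly justified.
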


\begin{proof}
Note that $L(K)\subseteq L(K_n)$.
Suppose that $L(K)\neq L(K_n)$, then
\[
1 \neq \Gal\left( L(K_n)/L(K)\right){\unlhd} \Gal\left( L(K_n)/K\right).
\]
Since these are $p$-groups there exists an index $p$ subgroup $N$ of $\Gal(L(K_n)/L(K))$ that is normal in $\Gal\left( L(K_n)/K\right)$, see \cite[Section 6.1, Theorem 1(3)]{DF}.
Consider the exact sequence
\[
1 \longrightarrow  \Gal\left( L(K_n)/L(K)\right)/N \longrightarrow  \Gal\left( L(K_n)/K\right)/N \longrightarrow  A_0 \longrightarrow  1.
\]
By hypothesis, $A_0$ is cyclic and it acts on $\Gal\left( L(K_n)/L(K)\right)/N$, by lifting and then conjugation.
However, $\Gal\left( L(K_n)/L(K)\right)/N$ has order $p$ which means that there are no automorphisms of order $p$.
The action of $A_0$ must therefore be trivial.
Hence, $\Gal\left( L(K_n)/K\right)/N$ is abelian.
Write $F$ for the number field fixed by $N$.
We have the following tower of number fields:
\[
K \subseteq K_n \subseteq L(K) \subsetneq F \subseteq L(K_n).
\]
This means that $F/K$ is abelian and unramified but this contradicts the assumption that $L(K)$ is the $p$-Hilbert class field.
\end{proof}

\begin{remark}\leavevmode
\begin{enumerate}[(i)]
\item This proposition does not hold when $A_0$ is non-cyclic, as we can see from the data.
\item It seems to be difficult to make any accurate predictions about the anti-cyclotomic Iwasawa invariants from this data.
For $d\ne 36276$, the result of Sands mentioned in the proof of Theorem~\ref{mu 0 lambda 0 maybe}, applied to $K_2/K_1$, says that $\mu(K_{\ac}/K)\le 1$, and if $\mu(K_{\ac}/K)=1$ then $\lambda(K_{\ac}/K)=0$.
For $d=36276$, we find that $\mu(K_{\ac}/K)\le 2$, and if $\mu(K_{\ac}/K)=2$ then $\lambda(K_{\ac}/K)=0$.
\end{enumerate}
\end{remark}

\section{Computational Methods I}
\label{S: Computation Method}
We now explain how the computations were performed to obtain data for the $p$-part of the ideal class groups of the first few layers of the anti-cyclotomic tower in the earlier sections.
Our techniques are a modification of those of \cite{BrHuWa}.

\subsection{}
Let $z$ be an element in the upper half plane and define 
\[
\eta(z) = q^{1/24}\prod_{n\geq 1}(1-q^n) \text{ where } q=e^{2\pi i z}.
\]
Define the \emph{Weber functions}
\[
f(z) = e^{-\pi i/24}\frac{\eta\left(\frac{z+1}{2}\right)}{\eta(z)} \text{ and } f_1(z) = \frac{\eta\left(\frac{z}{2}\right)}{\eta(z)}.
\]

The following result of R.~Schertz \cite[Theorem~1]{Sch02} plays a crucial role in our computations.

\begin{labelledconj}{Schertz's Theorem}{Thm: Schertz}
Let $\alpha$ be an element in the upper half plane satisfying the primitive equation
\[
AX^2 + BX + C =0 \text{ with } 2\nmid A \text{ and } B\equiv 0\pmod{32}
\]
with discriminant $D= B^2 - 4AC = -4m = -t^2 d$.
Then, each of the following numbers generates the ring class of conductor $t$ over the imaginary quadratic field of discriminant $-d$ in the case stated:
\begin{itemize}
\item $\left(\left( \frac{2}{A}\right)\frac{1}{\sqrt{2}}f(\alpha)^2 \right)^3$ if $m\equiv 1\pmod{8}$.
\item $f(\alpha)^3$ if $m\equiv 3\pmod{8}$.
\item $\left( \frac{1}{2}f(\alpha)^4 \right)^3$ if $m\equiv 5\pmod{8}$.
\item $\left(\left( \frac{2}{A}\right)\frac{1}{\sqrt{2}}f(\alpha) \right)^3$ if $m\equiv 7\pmod{8}$.
\item $\left(\left( \frac{2}{A}\right)\frac{1}{\sqrt{2}}f_1(\alpha)^2 \right)^3$ if $m\equiv 2,6\pmod{8}$.
\item $\left(\left( \frac{2}{A}\right)\frac{1}{2\sqrt{2}}f_1(\alpha)^2 \right)^3$ if $m\equiv 4\pmod{8}$.
\end{itemize}
Here, $\left( \frac{2}{A}\right)$ denotes the Legendre symbol.

Let $\alpha_1, \alpha_2, \ldots, \alpha_{h_t}$ be the roots corresponding to the set of all classes of conductor $t$ (satisfying the conditions that the coefficient $A_i$ is odd and $32\mid B_i$), and let $g$ be one of the above expressions.
Then $\{g(\alpha_i)\}_i$ gives a complete set of conjugates of $g(\alpha_1)$ over the imaginary quadratic field of discriminant $-d$.
\end{labelledconj}

Start with a primitive quadratic form 
\[
ax^2 + bxy + cy^2  \text{ with } b^2-4ac = -4m.
\]
If $2\mid a$, we perform the change of variables $y\mapsto (x+y)$ and obtain
\[
(a+b+c)x^2 + (b+2c)xy + cy^2.
\]
Note that $2\mid b$ and the original quadratic form is primitive.
Thus, $c$ and $a+b+c$ are both odd.
Next, suppose we have the quadratic form
\[
a_1x^2 + b_1xy + c_1y^2  \text{ with } 2\nmid a_1.
\]
Choose $m$ satisfying
\[
b_1 + 2a_1 m \equiv 0 \pmod{32}.
\]
Perform the change of variable $x\mapsto x+my$.
This gives the quadratic form
\[
a_1x^2 + (b_1+2a_1 m)xy + (c_1+mb_1+m^2a_1)y^2.
\]
Therefore, each class of quadratic forms is represented as a primitive quadratic form $Ax^2 + Bxy + Cy^2$ with $2\nmid A$ and $32 \mid B$.

Let $\alpha$ be the root in the upper half plane of $Ax^2 + Bx + C =0$.
Compute the value of the appropriate function $g(\alpha)$ from \ref{Thm: Schertz}.
This number generates the ring class field over $K=\Q(\sqrt{-d})$.
The minimal polynomial of $g(\alpha)$ over $\Q$ has roots $g(\alpha_1), \ldots, g(\alpha_{h_t})$.

For our study in the case $p=3$, we need polynomials that generate a layer of the anti-cyclotomic $\mathbb Z_3$-extension of $K$.
Such a layer will be contained in a ring class field of conductor $9^k$ for some $k$.
To use \ref{Thm: Schertz}, multiply $d$ by $4$ (if required) and by a power $9^k$ to get $-4m$.
For $k$ sufficiently large, compute the group of reduced primitive quadratic forms of discriminant $-4m$, call it $G$.
Choose an integer $M$ and let $H$ be the subgroup of $G$ consisting of elements of $G$ whose orders divide $M$.
As we explain below, an appropriate choice of $M$ makes $G/H$ cyclic of order a power of $3$.
The integer $M$ in our calculations was essentially the class number of $K$ times a power of $2$ and a power of $3$.

Define the polynomial
\[
\prod_{aH\in G/H}\left( x- \sum_{h\in H}g\left(\alpha_{ah}\right)\right);
\]
here $g$ is the function in \ref{Thm: Schertz} and $\alpha_{ah}$ is the element of the upper half plane corresponding to the class $ah$.
Observe that $\sum_{h\in H}g\left(\alpha_{ah}\right)$ is the trace to the fixed field of $H$; the roots of the polynomial must lie in this field, which is cyclic over $K$ and of degree a power of $p$.
In all the cases that were considered, these roots generate the field and the polynomial has rational integral coefficients.
We explain in detail below how, from the structure of the ring class group, %(and since we have removed the elements of order dividing the class number of $K$), 
we know that a cyclic $p$-power extension obtained in the manner given above is a layer of the anti-cyclotomic $\mathbb Z_3$-extension.

The calculations were done in floating point (often to 10000- or 20000-
digit accuracy for the larger discriminants) and then rounded to get
a polynomial with integer coefficients (the many-digit accuracy was
needed in order to make the rounding well-defined; GP-Pari keeps track
of this).
The polynomial generating the compositum of $K$ with the field
generated by a root of the polynomial yields a polynomial generating
the layer of the $\mathbb Z_3$-extension over$K$.
As a check, the discriminant of the number field was computed and it always had the correct prime factors to the correct powers.
As the power $9^k$ increases, we get higher layers of the $\mathbb Z_3$-extension.

Here are the details of why we get the anticyclotomic $\mathbb Z_3$ extension by our method.
We assume 3 is not split in $K/\mathbb Q$ and the class field of $K$ intersects the anticyclotomic $\mathbb Z_3$-extension $K_{\infty}$ in $K$.
Moreover, we assume $K\ne \mathbb Q(\sqrt{-3})$ and $K\ne \Q(\sqrt{-1})$.

Let $c$ be a positive integer and let $K_c$ be the ring class field over $K$ of conductor $c$.
Let $G_c=\text{Gal}(K_c/K)$, let $C$ be the ideal class group of $K$, and let $\mathcal O$ be the ring of integers of $K$.
Let $R_c=(\mathcal O/c\mathcal O)^{\times} / (\mathbb Z/c\mathbb Z)^{\times}$.
There is an exact sequence
\[
1 \longrightarrow R_c/{\mathcal O}^{\times} \longrightarrow G_c \longrightarrow C \to  1.
\]
By the Chinese Remainder Theorem, if $c=c_1c_2$ with $\gcd(c_1, c_2)=1$, then
\[
R_{c_1c_2}=R_{c_1}\times R_{c_2}.
\]

Note that the $3$-part of $R_{3^k}/{\mathcal O}^{\times}$ is cyclic of 3-power order or cyclic of 3-power order times $\mathbb Z/3\mathbb Z$.
From the exact sequence, we see that 
\[
\abs{G_c} = \abs{C}\cdot \abs{R_c/{\mathcal O}^{\times}} 
\]
for $c=2$ and $c=2\cdot 3^k$.
It follows that
\[
\abs{G_{2\cdot 3^k}} = \abs{G_2}\cdot \abs{R_{3^k}}.
\]

The prime of $K$ above 3 ramifies in $K_{3^k}/K$ to degree equal to $\abs{R_{3^k}/{\mathcal O}^{\times}}$; therefore, it ramifies at least to this degree in $K_{2\cdot 3^k}/K$.
Let $I_k$ be the inertia group in $G_{2\cdot 3^k}$ for a prime above 3.
Since 3 doesn't ramify in $K_2/K$, we see that
\[
\abs{G_2}\cdot \abs{R_{3^k}}=\abs{G_{2\cdot 3^k}}\ge \abs{G_2}\cdot \abs{I_k}\ge \abs{G_2}\cdot \abs{R_{3^k}/{\mathcal O}^{\times}} = \abs{G_2}\cdot \abs{R_{3^k}}/2,
\]
and therefore $|I_k| = |R_{3^k}|$ or $|I_k| = |R_{3^k}|/2$.

Moreover,
\[
 G_2\times I_k \text{ and } G_2\times R_{3^k}/{\mathcal O}^{\times}
\]
are each either isomorphic to $G_{2\cdot 3^k}$ or to a subgroup of index 2 in it
since these two groups embed in $G_{2\cdot 3^k}$ and the orders are equal up to a factor of 2. 
Since the $3$-part of $R_{3^k}/{\mathcal O}^{\times}$ is either cyclic of 3-power order or $\mathbb Z/3\mathbb Z$ times a cyclic group of 3-power order,
we see that
\[
G_{2\cdot 3^k}\simeq A\times B_k,
\]
where $A$ is independent of $k$ and $B_k$ is cyclic of 3-power order.
This decomposition is not unique, but we can arrange for $B_{k+1}\to B_k$ under the restriction map of 
$G_{2\cdot 3^{k+1}} \to G_{2\cdot 3^k}$ and $I_{k+1}\to I_k$  for each $k$.

Choose $M$ that annihilates $G_{2\cdot 3}$ and let $H_k=\{g\in G_{2\cdot 3^k}\,|\, g^M=1\}$.
Then $A\subseteq H_k$ and
\[
G_{2\cdot 3^k}/H_k\simeq (A\times B_k)/H_k\simeq B_k/(B_k\cap H_k),
\]
where this last group is cyclic of 3-power order.
Under the map $G_{2\cdot 3^{k+1}} \to G_{2\cdot 3^k}$, we have 
\[
B_{k+1}/(B_{k+1}\cap H_{k+1}) \to B_k/(B_k\cap H_k)
\]
for each $k$.
We see that the fixed fields of $B_k/(B_k\cap H)$, as $k$ varies, form the layers of a $\mathbb Z_3$-extension of $K$.
By the Galois action of $\Gal(K/\mathbb Q)$, it must be the anticyclotomic $\mathbb Z_3$-extension of $K$. 

\subsection{}
We now give a concrete example.
Let $d=1048$, $p=3$, and $K=\mathbb Q(\sqrt{-1048})$ which has class number $6$.
Note that $d/4 = 262 \equiv 6\pmod{8}$ so $g(\alpha) = \left(\left( \frac{2}{A}\right)\frac{1}{\sqrt{2}}f_1(\alpha)^2 \right)^3$.

Let $G$ be the group of reduced primitive quadratic forms of discriminant $-1048\times 9^k$ for $k=3,4$ and $H$ be the subgroup of forms of order dividing $24$.
When $k=3$, we have $\#G =216$ and $\#H = 72$.
Therefore, we get a polynomial of degree $3$.
The command \texttt{polredbest} in GP-Pari reduces this polynomial to one with smaller coefficients that generates the same field, namely
\[
x^3 + 42x -40.
\]
This polynomial generates the first layer $K_1/K$.
The polynomial that generates $K_1/\mathbb Q$ is given by
\[
x^6 - 90x^4 -56x^3 +4383x^2 + 4092x + 1046.
\]
When $k=4$, $\# G =648$ and $\# H = 72$.
The degree $9$ polynomial we obtain is
\[
x^9 - 99x^7 - 414x^6 - 2025x^5 + 13086x^4 + 396855x^3 + 830250x^2 + 1026756x - 2776392.
\]
Finally, the degree $18$ polynomial that generates $K_2/\mathbb Q$ is given by
\begin{multline*}
x^{18} - 2970x^{14} - 13080x^{12} + 7331265x^{10} + 259656840x^8 + 3345463512x^6\\
+ 18497139840x^4 + 129166770960x^2 + 1140995344768.
\end{multline*}

\section{\texorpdfstring{Non-cyclicity of $A_n$ when $A_0$ is disjoint from $K_{\ac}$}{}}
\label{S: non-cyclicity}
Recall that in Section~\ref{S: Data + Calculations disjoint} we considered the case that the $p$-Hilbert class field of $K$ is non-trivial and completely disjoint from the anti-cyclotomic tower.
The data in Tables~\ref{T: inert} and \ref{T: ramify} seem to indicate that $A_1$ and $A_2$ are non-cyclic.
In this section, we prove this observation.

\begin{theorem}
\label{T: no cyclic A_1}
Let $K$ be an imaginary quadratic field and let $p$ be an odd prime that does not split in $K$.
Suppose that $A_0$ is non-trivial.
If the $p$-Hilbert class field of $K$ is disjoint from $K_{\ac}$, then $A_n$ is non-cyclic for all $n\geq 1$.
\end{theorem}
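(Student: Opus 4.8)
The plan is to exploit the action of complex conjugation together with the dihedral relation characterising the anti-cyclotomic tower, and to run an induction on $n$. Since $K_\ac/\Q$ is Galois, each $K_n/\Q$ is Galois and complex conjugation $\sigma$ acts on $A_n$; moreover the pro-dihedral structure means $\sigma$ inverts $\Gal(K_n/K)$, so $\sigma\tau_{n-1}\sigma^{-1}=\tau_{n-1}^{-1}$ for the generator $\tau_{n-1}$ of $G_{n-1}$. As $p$ is odd, the first step is to split $A_n=A_n^+\oplus A_n^-$ into the $(\pm1)$-eigenspaces of $\sigma$. The elementary point driving everything is that a cyclic $p$-group is indecomposable and admits only $\pm1$ as automorphisms of order dividing $2$; so if $A_n$ were cyclic, $\sigma$ would act on it as a single scalar.

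Next I would show $A_n^-\neq 0$. The norm $\Norm_{K_n/K}\colon A_n\to A_0$ is surjective (total ramification) and $\sigma$-equivariant, and since $K$ is imaginary quadratic $\sigma$ acts as $-1$ on $A_0$; hence the norm kills $A_n^+$ and maps $A_n^-$ onto $A_0\neq 0$. Consequently a cyclic $A_n$ would force $\sigma=-1$ on all of $A_n$, i.e. $A_n=A_n^-$ and $A_n^+=0$. Writing the action of $\tau_{n-1}$ on the cyclic group $A_n$ as a scalar $u$, the dihedral relation gives $u=u^{-1}$, while $\tau_{n-1}^{p}=1$ gives $u^{p}=1$; as $p$ is odd these force $u=1$. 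Thus $G_{n-1}$ acts trivially on $A_n$, so $A_n^{G_{n-1}}=A_n$, and Lemma~\ref{Gerth lemma 4.2} yields $\#A_n=\#A_{n-1}$, whence $\Norm_{K_n/K_{n-1}}\colon A_n\to A_{n-1}$ is an isomorphism.

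This closes the induction step immediately: for $n\geq 2$, assuming $A_{n-1}$ is non-cyclic, the isomorphism $A_n\cong A_{n-1}$ contradicts the cyclicity of $A_n$. The whole theorem therefore reduces to the base case $n=1$, where the conclusion above reads $A_1\cong A_0$ via the norm, with $\Gal(K_1/K)$ acting trivially and $\sigma=-1$. Here I would bring in the unique prime $\mathfrak p_1$ of $K_1$ above $p$: being the only prime over $p$ it is fixed by $\sigma$, so its class lies in $A_1^+=0$ and is trivial in the $p$-part. On the other hand, from $\Norm_{K_1/K}$ being an isomorphism and $\Norm_{K_1/K}\circ(\text{natural map})=p$, the natural map $A_0\to A_1$ is multiplication by $p$ up to this isomorphism, so its kernel is $A_0[p]\neq 0$: capitulation occurs. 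The ambiguous classes of the cyclic, singly and totally ramified extension $K_1/K$ have order $\#A_0$ by Lemma~\ref{Gerth lemma 4.2}, so they cannot all come from the now strictly smaller image of $A_0$; the remaining strongly ambiguous generator is the class of $\mathfrak p_1$, which must then be nontrivial — contradicting $A_1^+=0$. Theorem~\ref{Gerth thm 1.3 c} is not needed here, but the eigenspace bookkeeping is.

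The main obstacle is exactly this last step, because the ambiguous class number formula relates $A_1^{G_0}$ not only to the class of $\mathfrak p_1$ but also to $H^1(\Gal(K_1/K),\cO_{K_1}^\times)$, and one must control the $\sigma$-eigenvalue of any non-strongly-ambiguous (unit-cohomology) class. When $A_0$ is non-cyclic the rank count is decisive and $[\mathfrak p_1]$ is forced to be nonzero, giving the contradiction directly. When $A_0$ is cyclic the unit cohomology can absorb one factor of $p$, and the non-cyclicity must instead be located in the minus part $A_1^-$, whose $p$-rank I expect to exceed that of $A_0$ precisely because of the capitulation established above. Pinning down that the extra class lands in $A_1^-$ — so that $A_1^-$ itself has $p$-rank at least $2$ — is where the genuine work lies, and is the step I would expect to occupy most of the argument.
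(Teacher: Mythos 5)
Your dihedral reduction is correct, and it is a genuinely different route from the paper's. The paper splits the problem into three cases ($\#A_1=\#A_0$, $A_1$ cyclic of order $\ge p^{u+2}$, and $A_1\simeq\Z/p^{u+1}\Z$) and disposes of the last, hardest one by an explicit matrix-group analysis of $\Gal(L(K_1)/K)$ and complex conjugation (Lemma~\ref{lemma to be used next} and Lemma~\ref{phi lemma}). Your observation collapses all of this: if $A_n$ is cyclic then $\Aut(A_n)$ is abelian, so the relation $\sigma\tau\sigma^{-1}=\tau^{-1}$ forces the image of $\tau$ in $\Aut(A_n)$ to have order dividing both $2$ and $p$, hence to be trivial; Lemma~\ref{Gerth lemma 4.2} (which applies because disjointness from $K_{\ac}$ makes the tower totally ramified) then gives $\#A_n=\#A_{n-1}$, and surjectivity of the norm reduces the whole theorem to ruling out the single case $A_1\simeq A_0$ cyclic with trivial $\Gal(K_1/K)$-action. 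This is cleaner than the paper's treatment of the cases $\#A_1>\#A_0$.

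However, that remaining case is exactly where your proof stops, and you say so yourself (``where the genuine work lies''). This is a genuine gap: the statement $\#A_1\ne\#A_0$ is precisely the content of the first lemma of Section~\ref{S: non-cyclicity} of the paper, proved there by showing $L(K)\cdot K_1=L(K_1)$, deducing that the unique prime $\mathfrak{P}$ above $p$ splits completely in $L(K_1)/K_1$ so its class has order prime to $p$, hence that $A_0\to A_1$ is surjective, which makes the composite $A_0\to A_1\xrightarrow{\Norm} A_0$ (the $p$-th power map) surjective --- absurd. Moreover, your diagnosis of why your own route stalls rests on a misidentification of the obstruction group. For the cyclic extension $K_1/K$, the quotient (ambiguous classes)/(strongly ambiguous classes) is isomorphic to $H^1\bigl(G_0,P_{K_1}\bigr)$, where $P_{K_1}$ is the group of principal fractional ideals, and the sequence $1\to\cO_{K_1}^\times\to K_1^\times\to P_{K_1}\to 1$ together with Hilbert 90 embeds this into $H^2\bigl(G_0,\cO_{K_1}^\times\bigr)\simeq \cO_{K}^\times/\Norm_{K_1/K}\bigl(\cO_{K_1}^\times\bigr)$ --- a quotient of the \emph{base} field's units, not the group $H^1\bigl(G_0,\cO_{K_1}^\times\bigr)$ you were worried about. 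Since $A_0\ne 1$ forces $\cO_K^\times=\{\pm 1\}$, and $-1=\Norm_{K_1/K}(-1)$ because $p$ is odd, this obstruction group is trivial: every ambiguous class is strongly ambiguous. Your own bookkeeping then finishes the proof: $A_1=A_1^{G_0}$ would lie in the subgroup generated by the image of $A_0$ (of order $p^{u-1}$, by your capitulation computation) and the $p$-part of $[\mathfrak{P}]$ (trivial, since it lies in $A_1^+=0$), giving $p^u\le p^{u-1}$, a contradiction. So the gap is fillable --- either by this unit computation or by quoting the paper's lemma --- but as written the decisive case of the theorem is unproven, and the difficulty you point to is not the real one.
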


The proof will occupy the remainder of this section.
Let $A_n$ denote the Sylow $p$-subgroup of the ideal class group of $K_n$.
Since $K_n/K_0$ is totally ramified, the norm map
\[
A_n \longrightarrow A_0
\]
is surjective.
Therefore, if $A_0$ is non-cyclic, then $A_n$ must be non-cyclic as well.
It suffices to prove the theorem in the case that $A_0$ is cyclic of order $p^u$ where $u\geq 1$.
Once again, the same argument implies that if we show $A_1$ is non-cyclic then $A_n$ must be non-cyclic for all $n\geq 1$.

\begin{remark}
In the case that $p=3$, we are assuming that $A_0\simeq \Z/3^u\Z$.
It follows that we need not worry about the special case when $p=3$ and $K=\Q(\sqrt{-3})$, where $A_0=1$.
Therefore, throughout our discussion, we have the additional property that $\cO_{K}^\times$ has order prime to $p$.
\end{remark}

\subsection{}
\label{general calc}
First, we prove two lemmas that hold for a general $\Zp$-extension of an imaginary quadratic field $K$ in which $p$ is not split and such that $K_1/K$ is ramified.
Write $G:=\Gal\left(K_1/K\right)\simeq \langle\tau\rangle$.
Let $\Norm_{K_1/K}$ denote the norm for $K_1/K$.
By Chevalley's formula (see also Lemma~\ref{Gerth lemma 4.2})
\begin{equation}
\label{Chevalley}
\# A_1^{G} = \frac{\# A_0 \times p}{ p [\cO_{K}^\times : \cO_{K}^\times \cap \Norm_{K_1/K}(K_1^\times)]} = \# A_0.
\end{equation}

\begin{lemma}
With the assumptions introduced above, $\# A_1\neq \# A_0$.
\end{lemma}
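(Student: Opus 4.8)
The plan is to show that the equality $\#A_1=\#A_0$ would force the group $G=\Gal(K_1/K)=\langle\tau\rangle$ to act trivially on $A_1$, and then to rule this out using the anti-cyclotomic (dihedral) action of $\Gal(K/\Q)$ together with the hypothesis that $p$ is non-split. By \eqref{Chevalley} we have $\#A_1^{G}=\#A_0$, and since $A_1^{G}\subseteq A_1$ this gives $\#A_1\ge\#A_0$ with equality precisely when $A_1=A_1^{G}$, i.e. when $\tau$ fixes every class in $A_1$. So I assume $\tau$ acts trivially on $A_1$ and aim for a contradiction.

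Under this assumption I would first extract the structural consequence that $\Gal(L(K_1)/K)$ is abelian. Indeed $L(K_1)/K$ is Galois (as $L(K_1)$ is characteristic over $K_1$ and $K_1/K$ is Galois), and the conjugation action of $\Gal(L(K_1)/K)$ on its abelian normal subgroup $A_1=\Gal(L(K_1)/K_1)$ factors through $G$; triviality of the $\tau$-action makes $A_1$ central, so $\Gal(L(K_1)/K)$ is a central extension of the cyclic group $G$ by $A_1$ and hence abelian. Since $K_1/K$ is totally ramified at the unique prime $\mathfrak p$ above $p$ while $L(K_1)/K_1$ is unramified, the inertia subgroup at $\mathfrak p$ meets $A_1$ trivially and maps isomorphically onto $G$; thus $\Gal(L(K_1)/K)\cong A_1\times\Z/p\Z$ with the second factor the inertia at $\mathfrak p$. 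Taking the fixed field of this inertia identifies $A_1$ with a quotient of $A_0$, and combined with $\#A_1=\#A_0$ this yields $A_1\cong A_0$ and $L(K_1)=L(K)\,K_1$.

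The contradiction then has to come from complex conjugation. Writing $\sigma$ for the generator of $\Gal(K/\Q)$, it acts by $-1$ on $A_0$ (because $K$ is imaginary quadratic, so $[\bar{\mathfrak a}]=[\mathfrak a]^{-1}$) and by inversion on $\Gal(K_1/K)$ (because $K_{\ac}/\Q$ is pro-dihedral); hence $\sigma$ acts as $-1$ on the whole abelian group $\Gal(L(K_1)/K)\cong A_0\times\Z/p\Z$, so that $\Gal(L(K_1)/\Q)$ is generalized dihedral. Because $p$ is non-split, $\sigma$ lies in the decomposition group of $\mathfrak p$, and I would examine the decomposition and inertia groups of a prime of $L(K_1)$ above $p$ inside this generalized dihedral group, comparing the forced action of Frobenius (inert case) or of the ramification generator (ramified case) on the order-$p$ inertia with what the local extension at $p$ can actually support. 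Equivalently, the goal is to show that the anti-cyclotomic, non-split local condition at $\mathfrak p$ is incompatible with $L(K_1)/K$ being abelian with inertia a direct factor — that is, that $K_1$ must carry an unramified abelian $p$-extension not contained in $L(K)\,K_1$, so that the image of the natural map $\iota\colon A_0\to A_1$ is proper and $A_1\supsetneq\iota(A_0)$.

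I expect this last step to be the main obstacle: the reduction and the abelian/dihedral structure are formal, but producing the contradiction is exactly where the two hypotheses — $p$ non-split and the tower being anti-cyclotomic rather than cyclotomic — must enter essentially (in the split or cyclotomic settings the analogous conclusion genuinely fails, so any argument that does not use them is wrong). The delicate point is the local analysis at $\mathfrak p$; should it prove unwieldy, the fallback I would attempt is a genus-theoretic computation for the CM quadratic extension $K_1/k$, where $k=K_1^{\sigma}$ is the totally real degree-$p$ subfield, tracking the ramification of the primes dividing $\disc K$ in the dihedral extension in order to force additional classes into $A_1$.
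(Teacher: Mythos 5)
Your reduction is the same as the paper's---Chevalley's formula gives $\#A_1^{G}=\#A_0$, so $\#A_1=\#A_0$ forces $A_1=A_1^{G}$---and your structural consequences are correct: $\Gal(L(K_1)/K)$ is then abelian, it splits as $A_1\times I$ with $I$ the inertia at the prime above $p$, the fixed field of $I$ is $L(K)$, hence $A_1\simeq A_0$ and $L(K_1)=L(K)\cdot K_1$ (the paper reaches this same intermediate point by a degree count). The genuine gap is that the contradiction is never produced, and the route you propose for it cannot succeed. Under your hypothesis, $L(K_1)$ \emph{equals} $L(K)\cdot K_1$, and $L(K)\cdot K_1$ is a field that actually exists for every $K$ under consideration, with exactly the properties you derived: abelian over $K$ with inertia a direct factor, generalized dihedral over $\Q$ with complex conjugation inverting both factors, sitting inside a ring class field of conductor a power of $p$ (cf.\ Section~\ref{S: Computation Method}). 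In particular, for $p$ inert the decomposition group at $p$ is dihedral of order $2p$ with Frobenius inverting the wild inertia, and this is precisely what the anti-cyclotomic tower produces locally (it corresponds to the minus part of the local units of $\Q_{p^2}$). So no analysis of decomposition and inertia groups, local or global, can distinguish the hypothetical $L(K_1)$ from the genuinely existing $L(K)\cdot K_1$; the contradiction must be arithmetic, not structural, and your genus-theoretic fallback is likewise only a sketch.

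What is missing---and it is exactly where non-splitness enters---is the paper's capitulation argument. Since $p$ does not split, either $\fp=(p)$ (inert) or $\fp^2=(p)$ (ramified), so the class of $\fp$ has order dividing $2$, hence prime to $p$; therefore $\fp$ splits completely in $L(K)/K$, and consequently $\fP$ splits completely in $L(K_1)=L(K)\cdot K_1$ over $K_1$, i.e.\ $[\fP]$ has order prime to $p$ in the class group of $K_1$. Now exploit $A_1=A_1^{G}$ ideal-theoretically: by Hilbert~90, every class in $A_1$ is represented by a $\tau$-invariant fractional ideal, and such ideals are products of powers of $\fP$ with ideals extended from $K$. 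Since $[\fP]$ contributes nothing to the $p$-part, the natural map $A_0\to A_1$ is surjective. Composing it with the norm $A_1\to A_0$ (surjective by total ramification) gives the $p$-th power map on $A_0$, which would then be surjective---impossible for a non-trivial finite abelian $p$-group. Note finally that your guiding principle, that the anti-cyclotomic (dihedral) structure must be used essentially, is mistaken for this lemma: the paper proves it for an arbitrary $\Zp$-extension of an imaginary quadratic $K$ with $p$ non-split and $K_1/K$ ramified (see the opening of Section~\ref{general calc}), including the cyclotomic one. Non-splitness, not anti-cyclotomy, is the hypothesis that does the work, because it is what makes $[\fp]$ prime to $p$.
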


\begin{proof}
Suppose that $\# A_1 = \# A_0$.
It follows from \eqref{Chevalley} that $A_1 = A_1^G$.

Denote by $I$ an ideal that represents a class in $A_1$.
Since it is fixed by $\tau$, we have
\[
I^\tau = I\alpha \text{ for some }\alpha\in K_1.
\]
Taking the norm to $K$ on both sides of the equation, we obtain that 
\[
(1) = \left(\Norm_{K_1/K}(\alpha)\right).
\]
Without loss of generality, we may assume that $N(\alpha) = 1$.
By Hilbert's Theorem 90 (for cyclic groups), we know that
%see for example wiki LOL
\[
\alpha = \beta^{1-\tau} \text{ for some }\beta\in K_1.
\]
It follows that
\[
(I\beta)^\tau = I\beta.
\]

Consider the following field diagram.
We write $L(K_i)$ to denote the $p$-Hilbert class field of $K_i$.

\begin{center}
\begin{tikzpicture}[node distance = 1.75cm, auto]
 \node (Q) {$\mathbb{Q}$};
 \node (K) [node distance=.7cm, above of=Q] {$K$};
 \node (K1) [node distance=2cm, above of=K] {$K_1$};
 \node (LK) [node distance=0.8cm, above of=K, left of=K] {$L(K)$};
 \node (C)[node distance=0.8cm, above of=K1, left of=K1] {$L(K)\cdot K_1$};
 \draw[-] (Q) to node {} (K);
 \draw[-] (K) to node {}(K1);
 \draw[-] (LK) to node {} (K);
 \draw[-] (K1) to node {} (C);
 \draw[-] (LK) to node {} (C);
 \draw[bend left=55] (K) to node {\tiny$A_0$} (LK);
 \draw[bend right=25] (K) to node [swap]{\tiny$\Z/p\Z$} (K1);
\end{tikzpicture}
\end{center}
By hypothesis there is a unique prime $\fp\mid p$ in $K$ that ramifies in $K_1$.
Denote by $\mathfrak{P}$ this unique prime above $p$ in $K_1$.
Recall that a prime splits  completely in the $p$-Hilbert class field if and only if its ideal class has order prime to $p$.
For a non-split prime $\fp\mid p$, we know that either $\fp$ or $\fp^2$ is principal in $K$.
Therefore, the ideal class has order $1$ or $2$.
It follows that $\fp$ splits completely in $L(K)/K$.
Note that $\mathfrak{P}\mid p$ splits completely in $L(K)\cdot K_1/K_1$.
We have assumed that $\#A_1 =\# A_0$, so we conclude that $L(K)\cdot K_1 = L(K_1)$ (since $K_1/K_0$ is a ramified extension).
Since we have shown that $\mathfrak{P} \mid p$ splits completely in $L(K_1)$, it has order prime to $p$ in the class group of $K_1$.

Again consider the ideal $I$.
From the above discussion, we conclude that there exists an integer $h'$ that is not divisible by $p$ such that the class of $I^{h'}$ is represented by an ideal from $A_0$.
We choose $h'$ to be the order of the prime-to-$p$ part of the class number of $K_1$.
Therefore, the natural map $A_0 \rightarrow A_1$ is surjective.
But, the norm map $A_1 \rightarrow A_0$ is also surjective because $K_1/K$ is a ramified extension.
This means that the composition
\[
A_0 \longrightarrow A_1 \xrightarrow{\Norm_{K_1/K}} A_0,
\]
which is the $p^{\text{th}}$-power map must be surjective.
But, this is a contradiction.
\end{proof}

\begin{lemma}
With the notation and setting as above, $A_1$ is not cyclic of order $p^k$ with $k\geq u+2$.
\end{lemma}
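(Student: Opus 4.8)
The plan is to derive a contradiction straight from Chevalley's formula \eqref{Chevalley} by exploiting how little room a group of order $p$ has to act on a cyclic $p$-group. Suppose for contradiction that $A_1 \simeq \Z/p^k\Z$ with $k \geq u+2$; note that $k \geq 3$ since $u \geq 1$. The Galois group $G = \langle\tau\rangle$ acts on $A_1$ by group automorphisms, and because $A_1$ is cyclic this action is multiplication by some unit $a \in (\Z/p^k\Z)^\times$. Since $\#G = p$, we have $\tau^p = 1$, so $a^p \equiv 1 \pmod{p^k}$.

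Next I would pin down the possibilities for $a$. Because $p$ is odd, $(\Z/p^k\Z)^\times$ is cyclic of order $p^{k-1}(p-1)$, so (using $k \geq 2$) its elements of order dividing $p$ form the cyclic subgroup of order $p$ inside the $p$-Sylow, namely the units $a = 1 + c\,p^{k-1}$ with $0 \leq c < p$. Hence either $a = 1$, giving a trivial action, or $a - 1 = c\,p^{k-1}$ with $c$ invertible modulo $p$.

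I would then compute the fixed subgroup $A_1^{G} = \ker(\tau - 1)$ in each case. If $a = 1$ the action is trivial and $\#A_1^{G} = p^k$. If $a \neq 1$, the condition $(a-1)x \equiv 0 \pmod{p^k}$ becomes $c\,p^{k-1} x \equiv 0 \pmod{p^k}$, which (as $c$ is a unit) is equivalent to $x \equiv 0 \pmod{p}$; thus $A_1^{G} = p\,\Z/p^k\Z$ has order $p^{k-1}$. In either case $\#A_1^{G} \in \{p^{k-1}, p^{k}\}$, whereas \eqref{Chevalley} gives $\#A_1^{G} = \#A_0 = p^{u}$. This forces $k \in \{u, u+1\}$, contradicting $k \geq u+2$, and the lemma follows.

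The argument is essentially formal once the Galois action has been transported into $(\Z/p^k\Z)^\times$, so I do not expect a serious obstacle. The only point demanding care is the structural input that $(\Z/p^k\Z)^\times$ is cyclic together with the resulting explicit description of its $p$-torsion, which is precisely where the hypothesis that $p$ is odd is used; the remaining fixed-point count and the appeal to \eqref{Chevalley} are routine.
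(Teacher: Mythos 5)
Your proof is correct, and its core is the same as the paper's: Chevalley's formula \eqref{Chevalley} gives $\#A_1^{G}=\#A_0=p^u$, while the structure of $\Aut(\Z/p^k\Z)\simeq(\Z/p^k\Z)^\times$ for odd $p$ (cyclic, with its elements of order dividing $p$ being exactly the units $1+c\,p^{k-1}$) pins down the possible sizes of the fixed subgroup. The one genuine difference is how the trivial action is handled. The paper first proves a separate claim that $L(K_1)/K$ is non-abelian, via the inertia group of the unique ramified prime: if it were abelian, the fixed field of inertia would be an unramified abelian $p$-extension of $K$ of degree $p^k>p^u$, contradicting $\#A_0=p^u$. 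This forces $\tau$ to act non-trivially, whence $\#A_1^{G}=p^{k-1}$ and $k=u+1$. You instead fold the trivial action into the case analysis: there $\#A_1^{G}=p^k$, and Chevalley's formula forces $k=u$, which contradicts $k\geq u+2$ just as well. Your route is thus slightly more elementary and self-contained, dispensing with the class-field-theoretic claim entirely; the paper's extra step buys the sharper conclusion that a cyclic $A_1$ must have order exactly $p^{u+1}$ (the case $k=u$ being excluded there by non-abelianness, and independently by the preceding lemma $\#A_1\neq\#A_0$), which is the case the following subsection then eliminates. For the lemma as stated, both arguments are complete.
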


\begin{proof}\leavevmode
\emph{Claim:} The extension $L(K_1)/K$ is non-abelian.\newline
\emph{Justification:} Suppose that $L(K_1)/K$ is abelian.
Let $\fp$ be the unique prime above $p$ in $K$.
The inertia group of $\fp$ has order $p$; so, the fixed field is an abelian unramified extension of $K$ of degree $p^k$.
But this contradicts the assumption that $\# A_0 =p^u$.

Recall the notation $G:=\Gal\left(K_1/K\right)=\langle \tau\rangle$.
As a consequence of the above claim, we see that $\tau$ gives a non-trivial automorphism of $A_1$ of order $p$.
%In other words, the action of $\tau$ on $A_1$ is non-trivial.

Now suppose that $A_1 \simeq \Z/p^k \Z$.
Note that $\Aut(A_1)$ has a unique subgroup of order p, which is generated by multiplication by $1 + p^{k-1}$.
The fixed group of this subgroup has order $p^{k-1}$ but we have observed before that $\# A_1^{G}=p^u$.
This implies that $k= u+1$.
\end{proof}

\subsection{}
In the remainder of the discussion we will crucially require that we are studying the anti-cyclotomic $\Zp$-extension of $K$.
Denote by $\mathcal{G}$ the Galois group $\Gal\left(L(K_1)/K_0\right)$.
Suppose that $A_1\simeq \Z/p^{u+1}\Z$.
We have the following exact sequence
\[
0 \longrightarrow \Z/p^{u+1}\Z \longrightarrow \mathcal{G} \longrightarrow \langle \tau\rangle \longrightarrow 0.
\]
Since $\tau$ acts on $\Z/p^{u+1}\Z$ as an automorphism of order $p$, we can identify
\[
\langle \tau \rangle \simeq 1+p^u\Z/p^{u+1}\Z.
\]
An element $x\in 1+p^u\Z/p^{u+1}\Z$ acts on $\Z/p^{u+1}\Z$ by multiplication by $x$.
The group $\mathcal{G}$ has the following concrete description
\[
\mathcal{G} \simeq \left\{\begin{pmatrix} a & b\\ 0 & 1\end{pmatrix} \mid a\in 1+p^u\Z/p^{u+1}\Z, \ b\in \Z/p^{u+1}\Z\right\}.
\]
The quotient group $G=\Gal\left(K_1/K\right)=\langle \tau \rangle$ is represented by matrices of the form $\begin{pmatrix} c & 0\\ 0 & 1\end{pmatrix}$.
Note that
\[
\begin{pmatrix} c & 0\\ 0 & 1\end{pmatrix} \begin{pmatrix} a & b\\ 0 & 1\end{pmatrix} \begin{pmatrix} c & 0\\ 0 & 1\end{pmatrix}^{-1} = \begin{pmatrix} c & 0\\ 0 & 1\end{pmatrix} \begin{pmatrix} a & b\\ 0 & 1\end{pmatrix} \begin{pmatrix} 1/c & 0\\ 0 & 1\end{pmatrix} = \begin{pmatrix} a & bc\\ 0 & 1\end{pmatrix}.
\]

\begin{lemma}
\label{lemma to be used next}
With the notation introduced above,
\[
\begin{pmatrix} a & b\\ 0 & 1\end{pmatrix}^p = \begin{pmatrix} 1 & pb\\ 0 & 1\end{pmatrix}
\]
\end{lemma}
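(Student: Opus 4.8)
The plan is to compute the $p$-th power of a general element of $\mathcal{G}$ directly and then read off the two nontrivial entries. For an upper-triangular matrix of this shape one checks by a one-line induction that
\[
\begin{pmatrix} a & b\\ 0 & 1\end{pmatrix}^p = \begin{pmatrix} a^p & b\left(1 + a + a^2 + \cdots + a^{p-1}\right)\\ 0 & 1\end{pmatrix},
\]
so it suffices to verify two congruences modulo $p^{u+1}$: that $a^p \equiv 1$ and that $b\sum_{i=0}^{p-1} a^i \equiv pb$. The first is immediate from the structure already imposed on the group: $a$ lies in the subgroup $1 + p^u\Z/p^{u+1}\Z$ of $(\Z/p^{u+1}\Z)^\times$, which is cyclic of order $p$, so $a^p = 1$ and the diagonal of the power is trivial.

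For the off-diagonal entry I would write $a = 1 + p^u k$ and expand each power binomially. Since we are working modulo $p^{u+1}$ and $u\geq 1$, every term $\binom{i}{j}p^{uj}k^j$ with $j\geq 2$ vanishes because $uj \geq 2u \geq u+1$; hence $a^i \equiv 1 + i\,p^u k$. Summing over $0 \leq i \leq p-1$ then gives
\[
\sum_{i=0}^{p-1} a^i \equiv p + p^u k\sum_{i=0}^{p-1} i = p + p^{u+1}k\cdot\frac{p-1}{2} \equiv p \pmod{p^{u+1}},
\]
where the final step uses that $p$ is odd, so $\tfrac{p-1}{2}\in\Z$ and the correction term is divisible by $p^{u+1}$. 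Multiplying by $b$ produces $pb$, which is exactly the upper-right entry in the claim.

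There is no genuine obstacle here; the only point to state carefully rather than grind through is that both vanishing arguments rely on the standing hypotheses of this subsection. The collapse of the higher binomial terms needs $2u \geq u+1$, i.e.\ $u\geq 1$, and the vanishing of the triangular-number correction needs $p$ odd so that $(p-1)/2$ is an integer. Both hold in our setting, since $A_0\simeq\Z/p^u\Z$ is assumed non-trivial (so $u\geq 1$) and $p$ is an odd prime, so the computation closes with no additional assumptions.
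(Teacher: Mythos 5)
Your proof is correct and takes essentially the same approach as the paper's: both reduce the computation to the identity $\left(\begin{smallmatrix} a & b\\ 0 & 1\end{smallmatrix}\right)^p = \left(\begin{smallmatrix} a^p & b\sum_{j=0}^{p-1}a^j\\ 0 & 1\end{smallmatrix}\right)$ (the paper via a unipotent-diagonal factorization and telescoping conjugations, you via a one-line induction), and then evaluate $\sum_{j=0}^{p-1}a^j \equiv p \pmod{p^{u+1}}$ by writing $a = 1+kp^u$ and using $u\geq 1$ together with the oddness of $p$. No changes are needed.
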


\begin{proof}
First, we rewrite
\[
\begin{pmatrix} a & b\\ 0 & 1\end{pmatrix} = \begin{pmatrix} 1 & b\\ 0 & 1\end{pmatrix} \begin{pmatrix} a & 0\\ 0 & 1\end{pmatrix}.
\]
Therefore
\begin{align*}
\begin{pmatrix} a & b\\ 0 & 1\end{pmatrix}^p &= \begin{pmatrix} 1 & b\\ 0 & 1\end{pmatrix} \begin{pmatrix} a & 0\\ 0 & 1\end{pmatrix} \times \begin{pmatrix} 1 & b\\ 0 & 1\end{pmatrix} \begin{pmatrix} a & 0\\ 0 & 1\end{pmatrix} \times \ldots \times \begin{pmatrix} 1 & b\\ 0 & 1\end{pmatrix} \begin{pmatrix} a & 0\\ 0 & 1\end{pmatrix} \\    
& = \begin{pmatrix} 1 & 0\\ 0 & 1\end{pmatrix}\begin{pmatrix} 1 & b\\ 0 & 1\end{pmatrix} \begin{pmatrix} a & 0\\ 0 & 1\end{pmatrix}^{-1} \times \begin{pmatrix} a & 0\\ 0 & 1\end{pmatrix}^2 \begin{pmatrix} 1 & b\\ 0 & 1\end{pmatrix} \begin{pmatrix} a & 0\\ 0 & 1\end{pmatrix}^{-2} \times \ldots \\
&\times \begin{pmatrix} a & 0\\ 0 & 1\end{pmatrix}^{p-1} \begin{pmatrix} 1 & b\\ 0 & 1\end{pmatrix} \begin{pmatrix} a & 0\\ 0 & 1\end{pmatrix}^{-(p-1)} \times \begin{pmatrix} a & 0\\ 0 & 1\end{pmatrix}^{p} \\
& = \prod_{j=0}^{p-1}\begin{pmatrix} a & 0\\ 0 & 1\end{pmatrix}^{j}\begin{pmatrix} 1 & b\\ 0 & 1\end{pmatrix} \begin{pmatrix} a & 0\\ 0 & 1\end{pmatrix}^{-j} \times \begin{pmatrix} a & 0\\ 0 & 1\end{pmatrix}^p\\
& = \prod_{j=0}^{p-1}\begin{pmatrix} 1 & a^jb\\ 0 & 1\end{pmatrix} \times \begin{pmatrix} 1 & 0\\ 0 & 1\end{pmatrix}\\ %p-power gives identity
& = \begin{pmatrix} 1 & \sum_{j=0}^{p-1}a^jb\\ 0 & 1\end{pmatrix}.
\end{align*}
Recall that $a =  1+ kp^u$ for some integer $k$.
Therefore,
\[
\sum_{j=0}^{p-1}a^j  \equiv 1 + \left(1+kp^u\right) + \left(1+2kp^u\right) + \ldots + \left(1+(p-1)kp^u\right) \equiv p \pmod{p^{u+1}}.
\]
This completes the proof of the lemma.
\end{proof}

\begin{lemma}
\label{phi lemma}
There is no automorphism $\phi$ of $\mathcal{G}$ such that for each $a\in 1+p^u\Z/p^{u+1}\Z$, there exists $b_a\in \Z/p^{u+1}\Z$ satisfying
\[
\phi\left( \begin{pmatrix} a & 0\\ 0 & 1\end{pmatrix} \right) = \begin{pmatrix} 1/a & b_a\\ 0 & 1\end{pmatrix}.
\]
\end{lemma}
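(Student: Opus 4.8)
The plan is to pin down $\phi$ on two generators of $\mathcal{G}$ and reach a contradiction through element orders. Write $\sigma = \begin{pmatrix} 1 & 1 \\ 0 & 1 \end{pmatrix}$ and $t = \begin{pmatrix} 1+p^u & 0 \\ 0 & 1 \end{pmatrix}$, so that $\sigma$ generates the normal subgroup of matrices $\begin{pmatrix} 1 & * \\ 0 & 1\end{pmatrix}$, which is cyclic of order $p^{u+1}$, while $t$ has order $p$ and the conjugation computed just above reads $t\sigma t^{-1} = \sigma^{1+p^u}$; thus $\mathcal{G} = \langle \sigma, t\rangle$. Assume for contradiction that $\phi$ exists. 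Taking $a = 1+p^u$ and writing $s := \phi(t)$, the hypothesis gives $s = \begin{pmatrix} 1-p^u & \delta \\ 0 & 1\end{pmatrix}$, using $(1+p^u)^{-1} \equiv 1-p^u \pmod{p^{u+1}}$, for some $\delta$. Write $g := \phi(\sigma) = \begin{pmatrix} \alpha & \beta \\ 0 & 1\end{pmatrix}$, where automatically $\alpha \in 1+p^u\Z/p^{u+1}\Z$ since this holds for every element of $\mathcal{G}$; say $\alpha = 1 + kp^u$.

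The engine of the argument is Lemma~\ref{lemma to be used next}, which gives $\begin{pmatrix} a & b \\ 0 & 1\end{pmatrix}^p = \sigma^{pb}$ for every element of $\mathcal{G}$. Since $\sigma$ has order $p^{u+1}$, such an element has $p$-th power equal to $1$ exactly when $p^u \mid b$, and has order $p^{u+1}$ exactly when $p \nmid b$. Because $t^p = 1$ we get $s^p = \phi(t^p) = 1$, hence $\sigma^{p\delta}=1$ and therefore $p^u \mid \delta$. Because $\phi$ is an automorphism and $\sigma$ has order $p^{u+1}$, the element $g = \phi(\sigma)$ also has order $p^{u+1}$, and therefore $p \nmid \beta$. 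These two divisibility facts are the only consequences of the order constraints that we will need.

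The contradiction comes from transporting the defining relation. Applying $\phi$ to $t\sigma t^{-1} = \sigma^{1+p^u}$ yields $s g s^{-1} = g^{1+p^u}$ in $\mathcal{G}$. As all entries lie in the commutative ring $\Z/p^{u+1}\Z$, a direct multiplication shows that the upper-right entry of $sgs^{-1}$ is $\delta(1-\alpha) + (1-p^u)\beta$, while that of $g^{1+p^u}$ is $\beta\sum_{j=0}^{p^u}\alpha^{j}$. Expanding $\alpha^{j} \equiv 1 + jkp^u \pmod{p^{u+1}}$ and summing gives $\sum_{j=0}^{p^u}\alpha^{j}\equiv 1+p^u$, so the right-hand entry is $\equiv (1+p^u)\beta$. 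Now $p^u \mid \delta$ together with $\alpha \equiv 1 \pmod{p^u}$ forces $\delta(1-\alpha)\equiv 0 \pmod{p^{u+1}}$, so equating the two entries leaves $(1-p^u)\beta \equiv (1+p^u)\beta$, i.e.\ $2\beta p^u \equiv 0 \pmod{p^{u+1}}$. Since $p$ is odd this gives $p \mid \beta$, contradicting $p\nmid\beta$. Hence no such $\phi$ exists.

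The only delicate point is the modular bookkeeping in the last step. Because $\alpha - 1 = kp^u$ is a zero divisor rather than a unit in $\Z/p^{u+1}\Z$, one cannot evaluate $\sum_{j=0}^{p^u}\alpha^{j}$ via the geometric-series formula and must instead expand term by term, verifying that the quadratic correction $kp^u\binom{p^u+1}{2}$ vanishes modulo $p^{u+1}$ (it carries a factor $p^{2u}$ and $2u\geq u+1$). The analogous care extracts $p^u \mid \delta$ from $s^p=1$ through Lemma~\ref{lemma to be used next}. Once these order computations are recorded, the contradiction is purely formal.
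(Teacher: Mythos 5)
Your proof is correct and follows essentially the same route as the paper's: you evaluate $\phi$ on the same two elements (the paper's $P=\begin{pmatrix}1+p^u & 0\\ 0& 1\end{pmatrix}$ and $M=\begin{pmatrix}1&1\\0&1\end{pmatrix}$), use Lemma~\ref{lemma to be used next} to extract $p^u\mid\delta$ from $s^p=1$ and $p\nmid\beta$ from the order of $\phi(\sigma)$, and then transport the relation $t\sigma t^{-1}=\sigma^{1+p^u}$ through $\phi$ to reach the same contradiction $2\beta p^u\equiv 0\pmod{p^{u+1}}$. The only cosmetic difference is that you expand $g^{1+p^u}$ by a term-by-term geometric sum where the paper writes it as $\phi(M)^{p^u}\phi(M)$ and reuses the power lemma; both computations are equivalent.
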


\begin{proof}
We prove this by contradiction; suppose such an automorphism $\phi$ exists.

Consider the matrix $P = \begin{pmatrix} 1+p^u & 0\\ 0 & 1\end{pmatrix}$.
Then, there exists $b$ such that
\[
\phi\left( P \right) = \begin{pmatrix} 1-p^u & b\\ 0 & 1\end{pmatrix}.
\]
Note that $P$ has order $p$.
Therefore, using Lemma~\ref{lemma to be used next} we see that
\[
\begin{pmatrix} 1 & 0\\ 0 & 1\end{pmatrix} = \begin{pmatrix} 1-p^u & b\\ 0 & 1\end{pmatrix}^p = \begin{pmatrix} 1 & pb\\ 0 & 1\end{pmatrix}.
\]
Consequently, $b\equiv 0\pmod{p^u}$.

Now consider the matrix $M = \begin{pmatrix} 1 & 1\\ 0 & 1\end{pmatrix}$ of order $p^{u+1}$.
Applying the automorphism $\phi$, we obtain
\[
\phi\left( M \right) = \begin{pmatrix} d & e\\ 0 & 1\end{pmatrix}
\]
which must also have order $p^{u+1}$.
Therefore,
\[
\begin{pmatrix} 1 & 0\\ 0 & 1\end{pmatrix} \neq \begin{pmatrix} d & e\\ 0 & 1\end{pmatrix}^{p^u} = \begin{pmatrix} 1 & p^u e\\ 0 & 1\end{pmatrix}.
\]
Consequently, $e\not\equiv 0\pmod{p}$.

Finally, consider
\begin{equation}
\label{consider}
\phi\left(PMP^{-1} \right) = \phi\left( \begin{pmatrix} 1 & 1+p^u\\ 0 & 1\end{pmatrix}\right) = \phi\left(M^{p^u+1}\right).
\end{equation}
We know that
\begin{align*}
\phi\left(PMP^{-1} \right) = \phi(P)\phi(M)\phi(P^{-1}) &= \begin{pmatrix} 1-p^u & b\\ 0 & 1\end{pmatrix} \begin{pmatrix} d & e\\ 0 & 1\end{pmatrix} \begin{pmatrix} 1-p^u & b\\ 0 & 1\end{pmatrix}^{-1}\\
& = \begin{pmatrix} d & e(1-p^u)\\ 0 & 1\end{pmatrix}.
\end{align*}
For the last equality, we have used the fact that $b\equiv 0 \pmod{p^u}$ and $d\equiv 1\pmod{p^u}$.

On the other hand
\begin{align*}
\phi(M^{p^u+1}) = \phi(M)^{p^u}\phi(M) &= \begin{pmatrix} d & e\\ 0 & 1\end{pmatrix}^{p^u} \begin{pmatrix} d & e\\ 0 & 1\end{pmatrix}\\
& = \begin{pmatrix} 1 & p^u e\\ 0 & 1\end{pmatrix} \begin{pmatrix} d & e\\ 0 & 1\end{pmatrix} \quad \text{ by Lemma~\ref{lemma to be used next}}\\
&=\begin{pmatrix} d & e(1+p^u)\\ 0 & 1\end{pmatrix}.
\end{align*}
By \eqref{consider}, it follows that
\[
\begin{pmatrix} d & e(1-p^u)\\ 0 & 1\end{pmatrix} = \begin{pmatrix} d & e(1+p^u)\\ 0 & 1\end{pmatrix}.
\]
Equivalently,
\[
e(1-p^u) \equiv e(1+p^u) \pmod{p^{u+1}}.
\]
This contradicts the fact that $e\not\equiv 0\pmod{p}$.
This completes the proof of the lemma.
\end{proof}

We now prove Theorem~\ref{T: no cyclic A_1}.

\begin{proof}
We know from the calculations done in Section~\ref{general calc} that $A_1$ is not cyclic of order $p^u$ or order $p^k$ where $k\geq {u+2}$.
We are only left to eliminate the case that $A_1$ is cyclic of order $p^{u+1}$.

The concrete description of $A_1$ is as follows:
\[
A_1 = \Gal\left(L(K_1)/K_1\right)  \simeq \left\{\begin{pmatrix} 1 & b\\ 0 & 1\end{pmatrix} \mid b\in \Z/p^{u+1}\Z\right\}.
\]
Complex conjugation gives an element $\phi\in \Aut(\mathcal{G})$ that maps $A_1$ to itself.
Since $K_1$ is contained in $K_{\ac}$, it means that $\phi$ acts by inversion on $\langle \tau \rangle$; i.e.,
\[
\phi\left( \begin{pmatrix} a & 0\\ 0 & 1\end{pmatrix} \right) \equiv \begin{pmatrix} 1/a & 0\\ 0 & 1\end{pmatrix} \pmod{A_1}.
\]
But, such a $\phi$ does not exist by Lemma~\ref{phi lemma}.
This contradicts the assumption that $A_1\simeq \Z/p^{u+1}\Z$.
This completes the proof of the theorem.
\end{proof}

\section{Heuristics}
\label{S: Heuristics}
In this section, we aim to understand, as $K$ varies over all imaginary quadratic fields, how often $L(K)$ is disjoint from $K_{\ac}$, partially intersecting $K_{\ac}$, and completely contained inside $K_{\ac}$.

\subsection{}
Fix an odd prime $p$.
Let $K$ be an imaginary quadratic field and $K_{\ac}$ denote the anti-cyclotomic $\Zp$-extension of $K$.
Let $K_n$ denote the $n$-th layer of the $\Zp$-extension, and $L(K)$ be the $p$-Hilbert class field of $K$.
Write $A_n$ to denote the $p$-part of the ideal class group of $K_n$ for all $n\geq 0$.

\begin{labelledconj}{Intersection Heuristics}{H: intersection}
Fix a finite abelian $p$-group $G$.
Let $K$ vary over imaginary quadratic fields such that the $p$-part of its ideal class group is isomorphic to $G$.
The probability that $L(K)\cap K_{\ac}=K_n$ is given by
\[
\frac{\#(\text{elements of $G$ of order exactly }p^n)}{\#G}.
\]
This number is also
\[
\tag{H1}
\label{H1}
\frac{\#(\text{homomorphisms }G\longrightarrow{\Q/\Z} \text{ with image of size }p^n)}{\# G}.
\]
\label{heuristics for intersection}
\end{labelledconj}
Note that the numerator of \eqref{H1} can also be interpreted as 
\[
\#(\text{surjective homomorphisms }G\longrightarrow{\Z/p^n\Z}).
\]
In particular, we expect that as $K$ varies over imaginary quadratic fields with $A_0\simeq G$, the probability that class field is disjoint from $K_{\ac}$ is $\frac{1}{\# G}$.

We further remark that our calculations suggest that the \ref{H: intersection} is independent of the splitting type.
For example, the heuristics remain the same even if we restrict ourselves to vary over imaginary quadratic fields such that $p$ splits in $K$ and the $p$-part of the ideal class group is isomorphic to $G$.
This agrees with the computational evidence that the Cohen-Lenstra predictions are the same for each splitting type.

\subsubsection{} As $K$ varies over all imaginary quadratic fields, the probability that the class field is disjoint is given by
\begin{equation}
\label{disjoint class field}
\frac{\sum_{G} (\text{frequency of }G)(1/\#G)}{\sum_{G} (\text{frequency of }G)}.
\end{equation}

Set $\Aut(G)$ to denote the group of all automorphisms of $G$.
Define
\[
w(p^k) = \sum_{\# G = p^k} \frac{1}{\# \Aut(G)}.
\]
Here, the sum is over finite abelian $p$-groups of order $p^k$.

It follows from the Cohen--Lenstra heuristics (see for example \cite[Proposition~5.7]{Woo16}) that the denominator of \eqref{disjoint class field} is given by
\[
\sum_{G} (\text{frequency of }G) = \sum_{\alpha=0}^{\infty} w(p^\alpha) = \prod_{j\geq 1} \left(1-p^{-j} \right)^{-1}.
\]
On the other hand, the numerator of \eqref{disjoint class field} is given by
\[
\sum_{G} (\text{frequency of }G)(1/\#G) = \sum_{\alpha=0}^\infty w(p^\alpha)p^{-\alpha} = \prod_{j\geq 2}\left(1-p^{-j} \right)^{-1}.
\]
Therefore, we obtain that as $K$ varies over all imaginary quadratic number fields, the (expected) probability that the ideal class group of $K$ is \emph{completely disjoint} from $K_{\ac}$ is given by 
\begin{equation}
\label{prob of disjoint}
\prod_{j\geq 2}\left(1-p^{-j} \right)^{-1} \times \prod_{j\geq 1}\left(1-p^{-j} \right) = 1 - \frac{1}{p}.
\end{equation}

\begin{remark}
Note that this includes those imaginary quadratic fields for which $p\nmid h_K$ and these fields are a major contribution to the total.
As per the Cohen--Lenstra heuristics, varying over all imaginary quadratic fields the probability that $p\nmid h_k$ is given by 
\[
\prod_{j\geq 1} \left( 1-p^{-j}\right).
\]
\end{remark}

\subsubsection{}
Let us now calculate, as $K$ varies over all imaginary quadratic fields, the probability that the $p$-Hilbert class field is contained in $K_{\ac}$.
For such a $K$, we must have $A_0$ cyclic.

If $G$ is a cyclic $p$-group of order $p^k$ with $k\geq 1$, its automorphism group has order $\phi(p^k)$.
Also,
\[
\#(\text{surjective homomorphisms }G\rightarrow\Z/p^k\Z) = \phi(p^k).
\]
As we vary over all imaginary quadratic fields $K$, the probability that the $p$-class field is (non-trivial) and \emph{completely contained} in $K_{\ac}$ is given by
\begin{multline}
\label{prob Fujii}
\frac{\displaystyle{\sum_{k\geq 1, G \simeq \Z/p^k\Z}} \left(\text{frequency of }G\right)\cdot \frac{\#\left(\text{surjective homomorphisms }G\rightarrow \Z/p^k\Z\right)}{\#G}}{\sum_{G} (\text{frequency of }G)} \\
= \left(\sum_{k\geq 1}\frac{1}{\phi(p^k)}\cdot \frac{\phi(p^k)}{p^k}\right)\prod_{j\geq 1}\left( 1-p^{-j}\right)
= \frac{1}{p-1}\prod_{j\geq1}\left(1-p^{-j}\right).
\end{multline}

\subsection{}
In this section we predict how often $\mu(K_{\ac}/K) = \lambda(K_{\ac}/K)=0$ as $K$ varies over all imaginary quadratic fields such that $p$ does not split.

The computational data presented in Section~\ref{S: Data + Calculations disjoint} and the results proven in Section~\ref{S: Invariants disjoint} suggest the following: in addition to the case, where the anti-cyclotomic Iwasawa invariants are provably $0$, we should expect that $\mu(K_{\ac}/K)=\lambda(K_{\ac}/K)=0$ when $L(K)\cap K_{\ac}=K$.
The probability that the $p$-Hilbert class field is disjoint from the anti-cyclotomic tower was calculated in \eqref{prob of disjoint}.
On the other hand, the probability that we are in the situation considered in Theorem~\ref{Fujii's thm} is calculated in \eqref{prob Fujii}.
Therefore, we may propose the following:
\begin{labelledconj}{Invariants Heuristics}{H: AC}
Among the imaginary quadratic fields $K$ in which $p$ does not split,
\begin{align*}
\Prob\left(\mu(K_{\ac}/K) = \lambda(K_{\ac}/K)=0\right) &\ge\left(1-\frac{1}{p}\right) + \frac{1}{p-1}\prod_{j\geq1}\left(1-p^{-j}\right)\\
& = 1-p^{-3}-p^{-4}-p^{-5}+p^{-7}+\cdots .
\end{align*}
\end{labelledconj}

The probability is obtained by combining the case $p \nmid h_K$ and the cases where $K\subsetneq L(K)$ is either disjoint or contained in $K_{\ac}$.

\subsection{Computational Method and Data}
In this section, we provide tables which show how much of the $p$-Hilbert class field of $K$ intersects with $K_\ac$, when $p=3,5$.
This provides computational evidence in support of our heuristics, at least in the case when $p=3,5$.
Note that in each of the tables, $k$ starts with a large value because Cohen--Lenstra heuristics are known to converge very slowly.
We explain how the computations were done using PARI/GP.

The starting point is a special case of the following result of D.~Brink \cite[Theorem~2]{Bri07}.

\begin{labelledconj}{Brink's Theorem}{Thm: Brink}
Let $p$ be an odd prime, and $K$ be an imaginary quadratic field of discriminant $-d$ and class number $h=p^{\theta} u$ (such that $\gcd(p,u)=1$).
Let $\ell\neq p$ be a prime number that splits as $\mathfrak{l}\overline{\mathfrak{l}}$ in $K$, and define $\nu$ as follows:
\[
[L(K)\cap K_{\ac}: K] = p^\nu.
\]
Further, write
\[
\ell^h = \begin{cases}
a^2 + db^2 & \text{if } d\not\equiv 3 \pmod{4}\\
a^2 + ab + \left( (d+1)/4\right)b^2 & \text{if } d\equiv 3 \pmod{4}
\end{cases}
\]
with relatively prime $a,b\in \Z$.
Set\[
\omega = \begin{cases}
\sqrt{-d} & \text{if } d\not\equiv 3 \pmod{4}\\
\left(1+\sqrt{-d}\right)/2 & \text{if } d\equiv 3 \pmod{4}
\end{cases}.
\]
The following conclusions hold for $n\geq 0$.
\begin{enumerate}[\textup{(}i\textup{)}]
\item \emph{$p$ splits in $K$:} Write $(a+b\omega)^{p-1} = a^* + b^*\omega$.
Then
\[
\mathfrak{l} \text{ splits completely in } K_n \Longleftrightarrow b^* \equiv 0 \pmod{p^{n+1+\theta-\nu}}.
\]
\item \emph{$p$ is inert in $K$:} Write $(a+b\omega)^{p+1} = a^* + b^*\omega$.
Then
\[
\mathfrak{l} \text{ splits completely in } K_n \Longleftrightarrow b^* \equiv 0 \pmod{p^{n+1+\theta-\nu}}.
\]
\item \emph{$p$ ramifies in $K$:}
If the conditions of the exceptional case (case (iv) below) don't hold, then,
\[
\mathfrak{l} \text{ splits completely in } K_n \Longleftrightarrow b \equiv 0 \pmod{p^{n+\theta-\nu}}.
\]
\item \emph{$p=3$ and $d\equiv 3\pmod{9}$:} Write $(a+b\omega)^{3} = a^* + b^*\omega$.
Then
\[
\mathfrak{l} \text{ splits completely in } K_n \Longleftrightarrow b^* \equiv 0 \pmod{p^{n+2+\theta-\nu}}.
\]
\end{enumerate}
\end{labelledconj}

%When $n=0$ in the above theorem, we get a characterization of the primes $\ell\neq p$ that split in $K$.
\begin{cor}
Keep the notation introduced above and let $v_p$ denote the $p$-adic valuation.
Then,
\[
\nu = \begin{cases}
1 +\theta - \min\{v_p(b^*)\mid \ell \neq p \text{ splits in }K\} & \text{if }p \text{ splits in }K.\\
1 +\theta - \min\{v_p(b^*)\mid \ell \neq p \text{ splits in }K\} & \text{if }p \text{ is inert in }K.\\
\theta - \min\{v_p(b)\mid \ell \neq p \text{ splits in }K\} & \text{if }p \text{ ramifies in }K \text{ but not the exceptional case}.\\
2 +\theta - \min\{v_p(b^*)\mid \ell \neq p \text{ splits in }K\} & \text{if }p=3 \text{ and } d\equiv 3\pmod{9}.
\end{cases}
\]
\end{cor}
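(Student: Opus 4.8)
The plan is to read off $\nu$ from \ref{Thm: Brink} by rewriting each splitting criterion as a statement about $p$-adic valuations and then pinning down the minimal valuation via the Chebotarev density theorem. I would treat all four cases uniformly by writing $\delta$ for the offset appearing in the relevant congruence, so that \ref{Thm: Brink} reads
\[
\mathfrak{l} \text{ splits completely in } K_n \Longleftrightarrow v_p(b^*) \ge n + \delta
\]
in the split, inert, and exceptional cases (with $\delta = 1+\theta-\nu$, $1+\theta-\nu$, and $2+\theta-\nu$ respectively), and with $b$ in place of $b^*$ and $\delta = \theta-\nu$ in the ramified non-exceptional case. Solving for $\nu$ in the displayed identity reproduces exactly the four expressions in the corollary, so it suffices to prove that $\min_\ell v_p(b^*) = \delta$ (respectively $\min_\ell v_p(b) = \delta$), where $\ell$ ranges over primes $\ne p$ that split in $K$.

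First I would establish the lower bound. Since $K_0 = K$ and $\ell$ splits in $K$ by hypothesis, the prime $\mathfrak{l}$ splits completely in $K_0$; applying \ref{Thm: Brink} with $n=0$ forces $v_p(b^*) \ge \delta$ for every such $\ell$, whence $\min_\ell v_p(b^*) \ge \delta$. Next I would establish the matching upper bound using Chebotarev. The key point is that there must exist a prime $\ell \ne p$ splitting in $K$ whose prime $\mathfrak{l}$ does \emph{not} split completely in $K_1$. Indeed, $K_1/\Q$ is a dihedral extension of degree $2p$ in which $\Gal(K_1/K)\simeq \Z/p\Z$ is the cyclic rotation subgroup and complex conjugation is a reflection. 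A prime $\ell$ splits in $K$ precisely when its Frobenius conjugacy class in $\Gal(K_1/\Q)$ is contained in this rotation subgroup, and its image in $\Gal(K_1/K)$ is then the Frobenius governing splitting in $K_1$. The nontrivial rotations form a union of conjugacy classes of positive density, so by Chebotarev there are infinitely many split primes $\ell$ whose Frobenius in $\Gal(K_1/K)$ is nontrivial; for such $\ell$ the prime $\mathfrak{l}$ fails to split completely in $K_1$. Applying \ref{Thm: Brink} with $n=1$ then gives $v_p(b^*) < 1+\delta$, i.e. $v_p(b^*)\le \delta$, so $\min_\ell v_p(b^*)\le \delta$. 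Combining the two bounds yields $\min_\ell v_p(b^*)=\delta$, and solving for $\nu$ finishes each case.

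The main obstacle is the Chebotarev step, namely verifying that one may always produce a split prime that does not split completely in $K_1$. This rests on the anti-cyclotomic layer $K_1/K$ being a genuinely nontrivial degree-$p$ extension and on understanding how the condition ``$\ell$ splits in $K$'' interacts with the Frobenius in the dihedral group $\Gal(K_1/\Q)$; one should also confirm that the choice of prime $\mathfrak{l}$ above $\ell$ is immaterial, which follows since $K_1/\Q$ is Galois and complex conjugation interchanges $\mathfrak{l}$ and $\overline{\mathfrak{l}}$ while preserving $K_1$. Once this is in place, the reduction to valuations and the case-by-case solving for $\nu$ are routine bookkeeping, the only care being the correct offset $\delta$ in each of the four cases of \ref{Thm: Brink}.
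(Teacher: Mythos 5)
Your proposal is correct and, at its core, takes the same route as the paper: specialize Brink's Theorem at the lowest layers and solve for $\nu$. The difference is one of completeness. The paper's entire proof is the single sentence that the corollary ``follows from the case $n=0$ of the theorem since the theorem then gives a characterization of the primes $\ell$ that split in $K$.'' Taken literally, the $n=0$ case yields only one inequality: since every $\mathfrak{l}$ trivially splits completely in $K_0=K$, all split primes satisfy $v_p(b^*)\ge\delta$ (in your notation), i.e.\ $\nu$ is \emph{at least} the claimed expression. The reverse inequality --- that the minimum valuation is actually attained at $\delta$ --- is exactly what your second step supplies: by Chebotarev applied to the dihedral extension $K_1/\Q$, there exist infinitely many primes $\ell\neq p$, split in $K$, whose $\mathfrak{l}$ does not split completely in $K_1$, and the $n=1$ case of Brink's Theorem then forces $v_p(b^*)\le\delta$ for such $\ell$. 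Your dihedral Frobenius analysis (split in $K$ iff the Frobenius class lies in the rotation subgroup, nontrivial rotations have positive density) and the remark that the choice of $\mathfrak{l}$ versus $\overline{\mathfrak{l}}$ is immaterial because $K_1/\Q$ is Galois are both correct, and they apply verbatim in all four cases of the theorem. So your write-up makes explicit and rigorous the sharpness step that the paper's one-line proof leaves implicit; this is a genuine improvement in rigor rather than a different method.
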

The corollary follows from the case $n=0$ of the theorem since the theorem then gives a characterization of the primes $\ell$ that split in $K$.
The degree $p^{\nu}$ of the intersection was determined using this corollary.
The fundamental discriminants $-d$ were considered in the ranges described below each table.
We found the minimum over only primes $\ell<400$ that split in $K$.
Hence, it is possible that the counts in our tables are slightly incorrect, but not enough to affect the overall agreement with our heuristic model \eqref{H1}.
The computations of the class numbers were done in PARI/GP, and therefore depend on the correctness of the Generalized Riemann Hypothesis.
We assume that this also does not affect the overall agreement with the heuristic model.

\subsubsection*{How to read the tables that follow}
The results are arranged according to the isomorphism class of $A_0$.
For example, in Table~\ref{T: 3k-1 100 to 101} in the second row, there are 31483 fields in the range mentioned such that $A_0\simeq \Z/9\Z$.
The fraction where $[L(K)\cap K_{\ac}:K]=9$ is $0.678017$, the fraction where $[L(K)\cap K_{\ac}:K]=3$ is $0.213734$, and the fraction where $L(K)$ is disjoint from $K_{\ac}$ is $0.108249$.
The row labelled `Expected' gives the fractions predicted by \eqref{H1}.

\begin{center}
\renewcommand{\arraystretch}{1.25}
\setlength{\aboverulesep}{0pt}
\setlength{\belowrulesep}{0pt}
\begin{longtable}{|c|c|c|c|c|c|c|}\caption{$d=3k-1,\quad 100,000,000 \le k \le 101,000,000$}\label{T: 3k-1 100 to 101}\\\toprule
$A_0$ &Number   &$3^4$ & $3^3$ & $3^2$ & 3 & $1$ \\ \midrule
3 &94133  & 0 & 0 & 0  & 0.674429  & 0.325571\\
Expected && 0 & 0 & 0 & 0.666667 & 0.333333 \\
\midrule
      9 & 31483&  0  & 0 &    0.678017  & 0.213734 &   0.108249 \\
Expected & & 0 & 0 & 0.666667 & 0.222222 & 0.111111\\
\midrule
      27  & 10262 &0 & 0.678718 &  0.214481 &  0.072988 &   0.033814\\
Expected & & 0 & 0.666667 & 0.222222 & 0.074074 & 0.037037\\
\midrule
  81 & 3551 & 0.669107 & 0.227542 &  0.069839   & 0.021121  &  0.012391\\
Expected & & 0.666667 & 0.222222 & 0.074074 & 0.024691 & 0.012346\\
\bottomrule
\end{longtable}
\renewcommand{\arraystretch}{1}
\end{center}

\begin{center}
\renewcommand{\arraystretch}{1.25}
\setlength{\aboverulesep}{0pt}
\setlength{\belowrulesep}{0pt}
\begin{longtable}{|c|c|c|c|c|c|c|}\caption{$d=3k+1,\quad 100,000,000 \le k \le 101,000,000$}\label{T: 3k+1 100 to 101}\\\toprule
$A_0$ &Number    & $3^4$&$ 3^3$ &  $3^2$ & $3$ & $1$ \\ \midrule
      $3$  & 93747 &  0 & 0 & 0 & 0.675328 &  0.324672  \\
Expected & & 0 & 0 & 0 &  0.666667 & 0.333333 \\
\midrule
      $9$ & 31072  & 0 & 0 & 0.677620 &  0.214663 &  0.107718 \\
Expected & & 0 & 0 & 0.666667 & 0.222222 & 0.111111\\
\midrule
     $27$ &10477  & 0 & 0.677770 &  0.216283 &  0.069486 &   0.036461  \\
Expected & & 0 & 0.666667 & 0.222222 & 0.074074 & 0.037037\\
\midrule
  81 & 3495  & 0.678970 &   0.214306 &  0.072103 &   0.023748 &   0.010873 \\
Expected & & 0.666667 & 0.222222 & 0.074074 & 0.024691 & 0.012346\\
\bottomrule
\end{longtable}
\renewcommand{\arraystretch}{1}
\end{center}

\newpage

\begin{center}
\renewcommand{\arraystretch}{1.25}
\setlength{\aboverulesep}{0pt}
\setlength{\belowrulesep}{0pt}
\begin{longtable}{|c|c|c|c|c|c|c|}\caption{$d=9k+3,\quad 1,000,000 \le k \le 1,100,000$}\label{T: 9k+3 1 to 11}\\\toprule
$A_0$ &Number    & $3^4$&$ 3^3$ &  $3^2$ & $3$ & $1$ \\ \midrule
      $3$  &  9298  & 0 & 0 & 0 & 0.687567 &  0.312433 \\
Expected & & 0 & 0 & 0 &  0.666667 & 0.333333 \\
\midrule
      $9$ &   2961 &  0 &  0 &  0.676798 &  0.209389 &   0.113813\\
Expected & & 0 & 0 & 0.666667 & 0.222222 & 0.111111\\
\midrule
     $27$ & 1103  & 0  &  0.675431 &  0.212149 &   0.074343 &    0.038078 \\
Expected & & 0 & 0.666667 & 0.222222 & 0.074074 & 0.037037\\
\midrule
  81 &   350 &  0.717143 &  0.182857 &   0.071429 &  0.020000 & 0.008571  \\
Expected & & 0.666667 & 0.222222 & 0.074074 & 0.024691 & 0.012346\\
\bottomrule
\end{longtable}
\renewcommand{\arraystretch}{1}
\end{center}

\begin{center}
\renewcommand{\arraystretch}{1.25}
\setlength{\aboverulesep}{0pt}
\setlength{\belowrulesep}{0pt}
\begin{longtable}{|c|c|c|c|c|c|c|}\caption{$d=9k+6,\quad 1,000,000 \le k \le 1,100,000$}\label{T: 9k+6 1 to 11}\\\toprule
$A_0$ &Number    & $3^4$&$ 3^3$ &  $3^2$ & $3$ & $1$ \\ \midrule
      $3$  &  9279  & 0 & 0 & 0 & 0.692532 &  0.307468 \\
Expected & & 0 & 0 & 0 &  0.666667 & 0.333333 \\
\midrule
      $9$ &  3029  & 0 & 0 & 0.689006 &  0.206339 &  0.104655 \\
Expected & & 0 & 0 & 0.666667 & 0.222222 & 0.111111\\
\midrule
     $27$ &  1065 & 0 &  0.712676 &  0.186854 &  0.066667 & 0.033803 \\
Expected & & 0 & 0.666667 & 0.222222 & 0.074074 & 0.037037\\
\midrule
  81 &   321 & 0.716511 &  0.190031 &  0.059190 &   0.018692 &  0.015576 \\
Expected & & 0.666667 & 0.222222 & 0.074074 & 0.024691 & 0.012346\\
\bottomrule
\end{longtable}
\renewcommand{\arraystretch}{1}
\end{center}

\begin{center}
\renewcommand{\arraystretch}{1.25}
\setlength{\aboverulesep}{0pt}
\setlength{\belowrulesep}{0pt}
\begin{longtable}{|c|c|c|c|c|c|c|}\caption{$d=3k-1,\quad 100,000,000 \le k \le 150,000,000$}\label{T: 3k-1 100 to 150}\\\toprule
$A_0$ &Number    & $3^4$&$ 3^3$ &  $3^2$ & $3$ & $1$ \\ \midrule
      $9\times 9$  &420 & 0 & 0 & 0.900000   & 0.078571   & 0.021429 \\
Expected && 0 & 0 & 0.888889 & 0.098765 & 0.012346 \\
\midrule
      $27\times 9$ & 215  & 0  & 0.688372   & 0.297674    &0.013953 & 0.000000\\
Expected & & 0 & 0.666667 & 0.296296  & 0.032922 & 0.004115 \\
\midrule
      $81\times 9$ & 72   & 0.555556    & 0.333333   & 0.111111 & 0.000000 & 0.000000 \\
Expected &  & 0.666667 & 0.222222 & 0.098765 & 0.010974 & 0.001372\\
\bottomrule
\end{longtable}
\renewcommand{\arraystretch}{1}
\end{center}

\begin{center}
\renewcommand{\arraystretch}{1.25}
\setlength{\aboverulesep}{0pt}
\setlength{\belowrulesep}{0pt}
\begin{longtable}{|c|c|c|c|c|}\caption{$d=3k+1,\quad 10,000,000 \le k \le 100,000,000$}\label{T: 3k+1 10 to 15}\\\toprule
$A_0$ &Number   & $3^2$ & 3 & $1$ \\\midrule
      $3\times 3\times 3$  & 597  & 0  & 0.988275  & 0.011725  \\
Expected & & 0 & 0.962963 & 0.037037 \\
\midrule
      $9\times 3\times 3$ &287   & 0.728223  & 0.268293 &  0.003484 \\
Expected &  & 0.666667 & 0.320988  & 0.012346  \\
\bottomrule
\end{longtable}
\renewcommand{\arraystretch}{1}
\end{center}

\begin{center}
\renewcommand{\arraystretch}{1.25}
\setlength{\aboverulesep}{0pt}
\setlength{\belowrulesep}{0pt}
\begin{longtable}{|c|c|c|c|c|c|c|}\caption{$d=5k+2,\quad 1,000,000 \le k \le 2,000,000$}\label{T: 5k+2 1 to 2}\\\toprule
$A_0$ &Number    & $5^4$&$ 5^3$ &  $5^2$ & $5$ & $1$ \\
\midrule
      $5$  &39002 &  0  & 0 & 0 &0.802779 &  0.197221  \\
Expected & & 0 & 0 & 0 &  0.800000 & 0.200000 \\
\midrule
      $25$ & 7639  & 0 & 0 &  0.807566 &  0.154078 &  0.038356  \\
Expected & & 0 & 0 & 0.800000 & 0.160000 & 0.040000\\
\midrule
     $125$ &1383 &0 &  0.806218 & 0.151844 &  0.033984 &  0.007954\\
Expected &  & 0 &  0.800000 & 0.160000  & 0.032000 & 0.008000\\
\midrule
 625 & 203 &   0.753695 &  0.201970 &  0.044335 & 0.000000 & 0.000000\\
Expected & & 0.800000 & 0.160000 & 0.032000 &  0.006400 & 0.001600 \\
\bottomrule
\end{longtable}
\renewcommand{\arraystretch}{1}
\end{center}

\begin{center}
\renewcommand{\arraystretch}{1.25}
\setlength{\aboverulesep}{0pt}
\setlength{\belowrulesep}{0pt}
\begin{longtable}{|c|c|c|c|c|c|}\caption{$d=5k,\quad 10,000,000 \le k \le 20,000,000$}\label{T: 5k 10 to 20}\\\toprule
$A_0$ &Number    & $ 5^3$ &  $5^2$ & $5$ & $1$ \\
\midrule
      $5\times 5$  & 2122 & 0 & 0 &  0.967484 &  0.032516  \\
Expected & & 0 & 0 & 0.960000 &  0.040000 \\
\midrule
      $25\times 5$ &493  & 0 &  0.833671 &  0.164300 &  0.002028  \\
Expected &  & 0 & 0.800000  & 0.192000 & 0.008000  \\
\midrule
     $125\times 5$ & 48  &  0.812500 &  0.145833 &  0.041667  & 0.000000\\
Expected &  & 0.800000 & 0.160000  & 0.038400 & 0.001600  \\
\bottomrule
\end{longtable}
\renewcommand{\arraystretch}{1}
\end{center}

\section*{Acknowledgements}
DK was supported by a PIMS postdoctoral fellowship at the time of writing the paper.
We thank Reinier Br{\"o}ker for helpful discussions.
We are grateful to the anonymous referees for their timely reading of the manuscript and for their valuable comments.

\bibliographystyle{amsalpha}
\bibliography{references}
\end{document}